\theoremstyle{plain}
\newtheorem{thm}{Theorem}
\newtheorem{prop}[thm]{Proposition}
\theoremstyle{definition}
\newtheorem{defn}[thm]{Definition}
\newtheorem{exmp}[thm]{Example}
\newtheorem{rem}[thm]{Remark}
\newtheorem*{conventions}{Conventions}
\DeclareMathOperator{\ob}{ob}
\DeclareMathOperator{\mor}{mor}
\DeclareMathOperator{\Aut}{Aut}
\DeclareMathOperator{\id}{id}
\DeclareMathOperator{\Sep}{Sep}
\DeclareMathOperator{\Per}{Per}
\DeclareMathOperator{\Supp}{Supp}
\DeclareMathOperator{\Ann}{Ann}
\DeclareMathOperator{\Top}{Top}
\DeclareMathOperator{\Ring}{Ring}
\DeclareMathOperator{\op}{op}
\DeclareMathOperator{\opsqr}{sqr}
\DeclareMathOperator{\opsqrt}{sqrt}
\DeclareMathOperator{\opabs}{abs}
\DeclareMathOperator{\dom}{Dom}
\DeclareMathOperator{\cod}{Cod}
\DeclareMathOperator{\orb}{Orb}
\begin{document}

\title{Skew Category Algebras Associated with Partially Defined Dynamical Systems}
\date{}

\author{Patrik Lundstr\"{o}m\footnote{Address: University West, Department of Engineering Science, SE-46186 Trollh\"{a}ttan, Sweden, E-mail: Patrik.Lundstrom@hv.se, Fax: +46-520-223099} \and Johan \"{O}inert\footnote{Address: Department of Mathematical Sciences, University of Copenhagen, Universitetsparken 5, DK-2100 Copenhagen \O, Denmark, E-mail: oinert@math.ku.dk, Fax: +45-35320704}}

%
%


\maketitle

\begin{abstract}

We introduce partially defined dynamical systems defined
on a topological space. To each such system we associate
a functor $s$ from a category $G$ to $\Top^{\op}$ and show
that it defines what we call a skew category algebra
$A \rtimes^{\sigma} G$. We study the connection between
topological freeness of $s$ and, on the one hand,
ideal properties of $A \rtimes^{\sigma} G$
and, on the other hand, maximal commutativity
of $A$ in $A \rtimes^{\sigma} G$.
In particular, we show that
if $G$ is a groupoid and
for each $e \in \ob(G)$
the group of all morphisms $e \rightarrow e$
is countable and the topological space $s(e)$
is Tychonoff and Baire, then the following
assertions are equivalent: (i) $s$ is topologically free;
(ii) $A$ has the ideal intersection property, that is
if $I$ is a nonzero ideal of $A \rtimes^{\sigma} G$,
then $I \cap A \neq \{ 0 \}$;
(iii) the ring $A$ is a maximal abelian
complex subalgebra of $A \rtimes^{\sigma} G$.
Thereby, we generalize a result
by Svensson, Silvestrov and de Jeu from
the addi\-tive group of integers to a large class of groupoids.\\[4pt]
{\bf MSC2000:} 16W50, 16S99
\end{abstract}


\pagestyle{headings}

\section{Introduction}

We have known for a long time that there is a connection between
topological properties of spaces and algebraical properties of
rings. In the classical papers \cite{mur36}, \cite{mur43} and \cite{von61}
by von Neumann and Murray a link between ergodic theory and the theory of von
Neumann algebras was established. Namely, they show that if the action is free, then the associated crossed
product is a factor if and only if the action is ergodic. They also
give precise conditions on the measure-theoretic side under which it
is a factor of a certain type. A similar well-known result is the
theorem of Krieger (see \cite{krieger69} and \cite{krieger76}) saying that two such ergodic group
actions are orbit equivalent if and only if their associated crossed
product von Neumann algebras are isomorphic. This strong ergodic
interplay has stimulated the study of a more general topological
situation in the following sense.
Suppose that $X$ is a topological space and
$s : X \rightarrow X$ is a continuous function;
in that case the pair $(X,s)$ is called a
\emph{topological dynamical system}.
An element $x \in X$ is called
\emph{periodic} if there is a positive integer $n$
such that $s^n(x)=x$; an element of $X$ which is not periodic
is called \emph{aperiodic}.
Recall that the topological dynamical system
$(X,s)$ is called {\it topologically free}
if the set of aperiodic elements of $X$ is dense in $X$.
Note that if $X$ carries the discrete topology,
then $(X,s)$ is topologically free if and only
if $(X,s)$ is {\it algebraically free}, that is if for every $x \in X$
and any positive integer $n$ we have that $s^n(x) \neq x$.
Now suppose that $s$ is a homeomorphism
of a compact and Hausdorff topological space $X$.
Denote by $C(X)$ the unital $C^*$-algebra of continuous complex-valued functions
on $X$ endowed with the supremum norm, conjugation as involution
and pointwise addition and multiplication.
The map $\sigma_s : C(X) \rightarrow C(X)$
which to a function $f \in C(X)$ associates $f \circ s \in C(X)$
is then an automorphism of $C(X)$.
The action of $\sigma_s$ on $C(X)$ extends in a unique
way to a strongly continuous representation $\sigma : {\Bbb Z} \rightarrow \Aut(C(X))$
subject to the condition that $\sigma(1) = \sigma_s$,
namely $\sigma(k) = \sigma_s^k$, for $k \in {\Bbb Z}$.
In that case, the associated transformation group
$C^*$-algebra $C^*(X,s)$ can be constructed
(see e.g. \cite{bla06} or \cite{pedersen79} for the details).
It has been observed,
in the works of Zeller-Meier \cite{zel68}, Effros, Hahn \cite{effros67},
Elliott \cite{elliot80}, Archbold,
Quigg, Spielberg \cite{archbold93, quigg92, spielberg91},
Kishimoto, Kawamura, Tomiyama \cite{kishimoto81, kawamura90, tomiyama92},
that topological freness of $(X,s)$ is closely linked with,
on the one hand, ideal properties of $C^*(X,s)$ and,
on the other hand, commutativity properties of $C(X)$ in $C^*(X,s)$.
The main impetus for this article is the following elegant theorem
(formulated as Theorem 4.5 in the book \cite{tomiyama92} by Tomiyama)
summarizing such results from \cite{archbold93, elliot80, kawamura90,
kishimoto81, tomiyama92, zel68}.

\begin{thm}\label{tomiyamatheorem}
If $s$ is a homemorphism of a compact and Hausdorff
topological space $X$, then the
following assertions are equivalent:
\begin{enumerate}[{\rm (i)}]
\item $(X,s)$ is topologically free;

\item if $I$ is a nonzero closed ideal of $C^*(X,s)$,
then $I \cap C(X) \neq \{ 0 \}$;

\item $C(X)$ is a maximal abelian $C^*$-subalgebra
of $C^*(X,s)$.
\end{enumerate}
\end{thm}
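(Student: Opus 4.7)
The plan is to prove the three implications (i) $\Rightarrow$ (ii), (i) $\Leftrightarrow$ (iii), and (ii) $\Rightarrow$ (i), which together establish the equivalence. As preliminary, I record the canonical faithful conditional expectation $E \colon C^*(X,s) \twoheadrightarrow C(X)$ obtained by averaging the dual $\mathbb{T}$-action $\hat{\sigma}_z(fu^k) = z^k fu^k$; this attaches to each $a \in C^*(X,s)$ a formal Fourier series $a \sim \sum_{k \in \Z} a_k u^k$ with $a_k \in C(X)$ and $E(a) = a_0$, whose Ces\`aro partial sums converge in norm to $a$. The covariance relation $uf = (f \circ s)u$ produces the identity $f_1 (a_k u^k) f_2 = f_1 \, a_k \, (f_2 \circ s^k) \, u^k$ for $f_1, f_2 \in C(X)$, which serves as my main computational tool throughout.

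The heart of the proof is (i) $\Rightarrow$ (ii). Let $I$ be a nonzero closed ideal and argue by contradiction, assuming $I \cap C(X) = \{0\}$; then $C(X) \hookrightarrow C^*(X,s)/I$ isometrically, since any injective $*$-homomorphism between $C^*$-algebras is isometric. After replacing a nonzero element by its $a^*a$, I take $0 \leq a \in I$ with $E(a) \neq 0$ (by faithfulness of $E$). For $\varepsilon > 0$, pick a Ces\`aro partial sum $a^{(N)} = \sum_{|k|\leq N} \lambda_{k,N} a_k u^k$ with $\|a - a^{(N)}\| < \varepsilon$ and $\lambda_{0,N}$ close to $1$. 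Topological freeness produces an aperiodic point $y$ at which $E(a)(y)$ is close to $\|E(a)\|$, and the compact Hausdorff structure supplies an open neighborhood $V \ni y$ with $s^k(V) \cap V = \emptyset$ for $0 < |k| \leq N$. Choosing a bump function $f \in C(X)$ with $0 \leq f \leq 1$, $f(y) = 1$, and $\Supp(f) \subseteq V$, the main identity kills the off-diagonal terms: $f(a_k u^k) f = 0$ for $0 < |k| \leq N$ because $\Supp(f) \cap \Supp(f \circ s^k) = \emptyset$. Hence $fa^{(N)}f = \lambda_{0,N} f E(a) f \in C(X)$ and $\|faf - fa^{(N)}f\| < \varepsilon$, so in $C^*(X,s)/I$ the image of $\lambda_{0,N} f E(a) f$ has norm at most $\varepsilon$ (as $faf \in I$); yet the isometric embedding of $C(X)$ forces that norm to be at least $\lambda_{0,N} E(a)(y)$, a positive constant independent of $\varepsilon$, and choosing $\varepsilon$ smaller gives the contradiction.

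For (i) $\Leftrightarrow$ (iii), the forward direction uses the commutator identity $[f, a_k u^k] = a_k (f - f \circ s^k) u^k$: if $a$ commutes with every $f \in C(X)$, uniqueness of Fourier coefficients forces each $a_k$ ($k \neq 0$) to be supported in the closed set $\Per_k := \{x : s^k(x) = x\}$; topological freeness makes $\Per_k$ have empty interior, so continuity of $a_k$ gives $a_k = 0$ and $a = E(a) \in C(X)$. Conversely, if $(X,s)$ is not topologically free, pick $n > 0$ and a nonempty open set $U$ with $s^n|_U = \id_U$ and a nonzero $f \in C(X)$ supported in $U$; then $fu^n \notin C(X)$, yet the main identity together with $g \circ s^n = g$ on $\Supp(f)$ yields $(fu^n) g = g (fu^n)$ for all $g \in C(X)$, contradicting (iii).

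Finally, (ii) $\Rightarrow$ (i) is the most delicate direction and I argue by contrapositive. If $(X,s)$ is not topologically free, a periodic point $x_0$ in the interior of some $\Per_n$ gives rise to a family of finite-dimensional representations of $C^*(X,s)$ (induced from characters of the stabilizer subgroup $n\Z \leq \Z$); assembling these with regular representations on the non-periodic orbits produces a representation $\rho$ that remains isometric on $C(X)$ yet has nontrivial kernel, killing a specific combination of $u^n$ and a bump function supported near $x_0$. Then $J := \ker \rho$ is a nonzero closed ideal with $J \cap C(X) = \{0\}$, contradicting (ii). The principal obstacle of the whole argument is the calibration in (i) $\Rightarrow$ (ii): the parameters $N$, $\varepsilon$, and the bump $f$ must be chosen in a compatible order so that the surviving diagonal term $\lambda_{0,N} f E(a) f$ dominates the off-diagonal Ces\`aro error. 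A secondary subtlety is arranging the representation $\rho$ in the last step to be simultaneously isometric on $C(X)$ and non-faithful on the larger algebra, which requires careful bookkeeping of the periodic orbits.
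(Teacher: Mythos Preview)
The paper does not prove Theorem~\ref{tomiyamatheorem}. It is stated in the introduction purely as a motivating result from the $C^*$-algebra literature, with attribution to Tomiyama's book \cite{tomiyama92} and to \cite{archbold93, elliot80, kawamura90, kishimoto81, zel68}; no argument is supplied. The paper's own contributions (Theorems~\ref{genmaintheorem}, \ref{manyimplications}, \ref{equivalent}, \ref{discrete}) all concern the purely algebraic skew category algebra $A \rtimes^{\sigma} G$, not the $C^*$-crossed product $C^*(X,s)$. There is therefore no in-paper proof to compare your proposal against.

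As an independent assessment: your outline follows the standard route in the $C^*$-literature and is broadly correct at the level of a sketch. The argument for (i)~$\Rightarrow$~(ii) via the faithful conditional expectation, Ces\`aro approximation, and bump functions at aperiodic points is the classical one, and the Fourier-coefficient support analysis for (i)~$\Leftrightarrow$~(iii) is likewise standard and correct. The one step that remains genuinely thin is your contrapositive (ii)~$\Rightarrow$~(i): constructing a representation $\rho$ that is simultaneously isometric on $C(X)$ and non-faithful on $C^*(X,s)$ is real work (one must exhibit the kernel explicitly and verify that it meets $C(X)$ trivially), and you have only gestured at the construction. If the proof were to stand on its own, this is where a reader would want the details filled in; alternatively, since you already have (i)~$\Leftrightarrow$~(iii), it would suffice to close the cycle with (ii)~$\Rightarrow$~(iii), which avoids representation theory.
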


Inspired by Theorem \ref{tomiyamatheorem},
Svensson, Silvestrov and de Jeu \cite{svesildejeu09}
have shown an analogous result (see Theorem \ref{maintheorem})
relating properties of an arbitrary topological dynamical system $(X,s)$,
where $s$ is a homeomorphism,
to ideal properties of the skew group algebra
$C(X) \rtimes^{\sigma} {\Bbb Z}$ (note that in the literature
this algebra is often referred to as a {\it crossed product}).
Recall that $C(X) \rtimes^{\sigma} {\Bbb Z}$ is
defined as the collection
of formal sums $\sum_{n \in {\Bbb Z}} f_n u_n$, for $f_n \in C(X)$,
where $f_n = 0$ for all but finitely many $n \in {\Bbb Z}$.
Define addition and multiplication on $C(X) \rtimes^{\sigma} {\Bbb Z}$ by
\begin{equation}\label{zaddition}
\left( \sum_{n \in {\Bbb Z}} f_n u_n \right) +
\left( \sum_{n \in {\Bbb Z}} g_n u_n \right)=
\sum_{n \in {\Bbb Z}} (f_n+g_n) u_n
\end{equation}
respectively
\begin{equation}\label{zproduct}
\left( \sum_{n \in {\Bbb Z}} f_n u_n \right)
\left( \sum_{n \in {\Bbb Z}} g_n u_n \right)=
\sum_{n \in {\Bbb Z}} \left( \sum_{m \in {\Bbb Z}} f_m \sigma(m) (g_{n-m}) \right) u_n
\end{equation}
for $\sum_{n \in {\Bbb Z}} f_n u_n$, $\sum_{n \in {\Bbb Z}} g_n u_n \in C(X) \rtimes^{\sigma} {\Bbb Z}$.
Note that if $X$ is compact and Hausdorff, so that
$C^*(X,s)$ is defined, then
the skew group algebra $C(X) \rtimes^{\sigma} {\Bbb Z}$ is
a norm dense complex subalgebra of the $C^*$-algebra $C^*(X,s)$.

\begin{thm}[Svensson, Silvestrov and de Jeu \cite{svesildejeu09}]\label{maintheorem}
Suppose that $s$ is a homeomorphism of a Tychonoff and
Baire topological space $X$. The following assertions are equivalent:
\begin{enumerate}[{\rm (i)}]
\item $(X,s)$ is topologically free;

\item if $I$ is a nonzero ideal of $C(X) \rtimes^{\sigma} {\Bbb Z}$,
then $I \cap C(X) \neq \{ 0 \}$;

\item $C(X)$ is a maximal abelian complex subalgebra of $C(X) \rtimes^{\sigma} {\Bbb Z}$.
\end{enumerate}
\end{thm}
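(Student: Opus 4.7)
The plan is to establish the equivalences through the cycle (i) $\Rightarrow$ (iii) $\Rightarrow$ (ii) $\Rightarrow$ (i), with the computational workhorse being the commutator identity
\[
[g, b] = \sum_{n \in \Z} h_n\,(g - g \circ s^n)\, u_n
\]
valid for $g \in C(X)$ and $b = \sum_n h_n u_n \in C(X) \rtimes^{\sigma} \Z$. Together with the Tychonoff hypothesis, which in particular guarantees that $X$ is Hausdorff and that continuous functions separate points from closed sets, this immediately yields the description that $b$ commutes with $C(X)$ if and only if $\mathrm{supp}(h_n) \subseteq \mathrm{Fix}(s^n)$ for every $n$.

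The implication (i) $\Rightarrow$ (iii) is now quick: if $C(X)$ is not maximal abelian, the characterization yields a nonzero continuous function supported in some $\mathrm{Fix}(s^{n_0})$ with $n_0 \neq 0$, forcing that closed set to have nonempty interior and the aperiodic set to fail density. For (iii) $\Rightarrow$ (ii) I would use the classical minimal-support trick: choose $a = \sum f_n u_n$ in a nonzero ideal $I$ with $|\mathrm{supp}(a)|$ minimal, and reduce to $0 \in \mathrm{supp}(a)$ by replacing $a$ with $a\,u_{-n_0}$ for some $n_0 \in \mathrm{supp}(a)$. If $|\mathrm{supp}(a)| \geq 2$, then $a \notin C(X)$, so by (iii) there exists $g \in C(X)$ with $[g,a] \neq 0$; the commutator identity shows $[g,a]$ has vanishing $u_0$-coefficient, producing an element of $I$ with strictly smaller support, a contradiction. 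Hence $a = f u_m$ with $f \neq 0$ and $f = a\,u_{-m} \in I \cap C(X) \setminus \{0\}$.

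The main obstacle is the last implication (ii) $\Rightarrow$ (i), which I would prove contrapositively. If (i) fails, then $\bigcup_{m \neq 0} \mathrm{Fix}(s^m)$ is a countable union of closed sets with nonempty interior, so the Baire property of $X$ forces some $\mathrm{Fix}(s^n)$ with $n \neq 0$ to have nonempty interior $W$; since $s^n$ is a homeomorphism fixing $W$ pointwise, $W$ is $s^n$-invariant, and Tychonoff yields a nonzero $h \in C(X)$ vanishing off $W$ with $h \circ s^n = h$. The element $e := h(u_n - 1)$ is then nonzero and commutes with $C(X)$. To exhibit a nonzero ideal containing $e$ that intersects $C(X)$ trivially, I would form the direct-sum orbit representation $\pi = \bigoplus_{x \in X} \pi_x$ on $\bigoplus_{x} \ell^2(\mathrm{orb}(x))$, defined by $\pi_x(f)\delta_y = f(y)\delta_y$ and $\pi_x(u_m)\delta_y = \delta_{s^{-m} y}$, the sign chosen to respect the covariance $u_m f = (f \circ s^m) u_m$. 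Then $\pi|_{C(X)}$ is injective because continuous functions on $X$ are determined by their values, while $\pi(e) = 0$ by a case split: for $x \in \mathrm{Fix}(s^n)$ one has $\pi_x(u_n) = \mathrm{id}$, and for $x \notin \mathrm{Fix}(s^n)$ the orbit of $x$ is disjoint from $\mathrm{Fix}(s^n)$ (any orbit point lying there would, upon translating by a power of $s$, force $x$ itself into $\mathrm{Fix}(s^n)$) and hence from $W$, making $\pi_x(h) = 0$. Then $\ker \pi$ is a nonzero ideal of $C(X) \rtimes^{\sigma} \Z$ meeting $C(X)$ only in $\{0\}$, contradicting (ii).
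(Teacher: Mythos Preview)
Your proof is correct. The commutant description via the identity $[g,b]=\sum h_n(g-g\circ s^n)u_n$, the implication (i)$\Rightarrow$(iii), and the minimal-support argument for (iii)$\Rightarrow$(ii) are exactly the ingredients the paper uses (Proposition~\ref{commutativeA}, Proposition~\ref{maxcomcondition}, and the groupoid half of Proposition~\ref{eqconditions}, all specialised to $G=\Z$).

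The one genuine divergence is how the cycle is closed. The paper never argues (ii)$\Rightarrow$(i) directly: it first proves (ii)$\Rightarrow$(iii) in Proposition~\ref{eqconditions}(a) by a purely additive device---given a nonzero $h$ with $h\cdot(g-g\circ s^n)=0$ for all $g$, it takes the ideal $I$ generated by $h-hu_n$ and observes that the coefficient-sum map $\varphi\bigl(\sum_n f_n u_n\bigr)=\sum_n f_n$ annihilates $I$ while restricting to the identity on $C(X)$, so $I\cap C(X)=\{0\}$---and only afterwards invokes the Baire property to get (iii)$\Rightarrow$(i) (Theorem~\ref{manyimplications}(d)). You instead merge the Baire step and the ideal construction into a single contrapositive (ii)$\Rightarrow$(i), and your witness that the ideal misses $C(X)$ is the kernel of the direct sum of orbit representations $\pi=\bigoplus_x\pi_x$ rather than $\ker\varphi$. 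Your construction is the one familiar from $C^*$-dynamical arguments and makes the role of orbits transparent; the paper's $\varphi$ is more elementary and has the incidental advantage that (ii)$\Rightarrow$(iii) goes through without any topological hypothesis on $X$ at that stage, which is why the paper can separate the algebraic implication from the Baire/Tychonoff one.
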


In fact Svensson, Silvestrov and de Jeu loc. cit. prove a more general statement
for skew group algebras $A \rtimes^{\sigma} {\Bbb Z}$
defined by complex subalgebras $A$ of $C(X)$ satisfying certain
extra conditions, see Theorem 4.5 in \cite{svesildejeu09} for more details.
For related results by the same authors,
see \cite{svesildejeu07} and \cite{svesildejeu08}.

In the above discussion the dynamics is
generated by a single continuous function.
Therefore, all powers of this function
commute which, in particular, means that
we have an abelian action on the space.
This raises the question if there is
a nonabelian version of Theorem 2.
Another natural question is if there
is a version of the same theorem that holds
for dynamics defined by families of
partial functions on a space, that is
functions that do not necessarily
have the same domain or codomain.
In this article we simultaneously
address both of these questions by
introducing category dynamical systems.
These are defined by families, stable under composition,
of continuous maps between potentially
different topological spaces.
We show generalizations of Theorem \ref{maintheorem}
to the case of skew category algebras defined by these maps
and spaces (see Theorem \ref{genmaintheorem} below and
Theorem \ref{manyimplications}, Theorem \ref{equivalent} and
Theorem \ref{discrete} in Section \ref{catdynsyst}).
To be more precise, suppose that $G$ is a category.
The family of objects of $G$ is denoted by $\ob(G)$;
we will often identify an object in $G$ with
its associated identity morphism.
The family of morphisms in $G$ is denoted by $\mor(G)$;
by abuse of notation, we will often write $n \in G$
when we mean $n \in \mor(G)$.
Throughout the article $G$ is assumed to be small, that is
with the property that $\mor(G)$ is a set.
The domain and codomain of a morphism $n$ in $G$ is denoted by
$d(n)$ and $c(n)$ respectively.
We let $G^{(2)}$ denote the collection of composable
pairs of morphisms in $G$, that is all $(m,n)$ in
$\mor(G) \times \mor(G)$ satisfying $d(m)=c(n)$.
For each $e \in \ob(G)$, let $G_e$ denote the
collection of $n \in \mor(G)$ with $d(n)=c(n)=e$.
Let $G^{\op}$ denote the opposite category of $G$.
We let $\Top$ denote the category
having topological spaces as objects
and continuous functions as morphisms.
Suppose that $s : G \rightarrow \Top^{\op}$ is a (covariant) functor;
in that case we say that $s$ is a {\it category dynamical system}.
If $G$ is a groupoid, that is a category
where all morphisms are isomorphisms, then
we say that $s$ is a {\it groupoid dynamical system}.
If $e \in \ob(G)$, then
we say that an element $x \in s(e)$ is \emph{periodic}
if there is a nonidentity $n : e \rightarrow e$ in $G$
such that $s(n)(x)=x$;
an element of $s(e)$ which is not periodic
is called \emph{aperiodic}.
We say that $s$ is {\it topologically free} if for each $e \in \ob(G)$,
the set of aperiodic elements of $s(e)$ is dense in $s(e)$.
For each $e \in \ob(G)$, we
let $C(e)$ denote the set of continuous complex valued
functions on $s(e)$.
For each $n \in G$ the functor $s$ induces a map
$\sigma(n) : C(d(n)) \rightarrow C(c(n))$
by the relation $\sigma(n)(f) = f \circ s(n)$,
for $f \in C(d(n))$.
If we use the terminology introduced in Section \ref{skewcategoryalgebras},
then the map $\sigma$ defines a {\it skew category system}
(see Definition \ref{defskewcategorysystem}).
In this section,
we show how one to
each skew category system may associate a so called
{\it skew category algebra} $A \rtimes^{\sigma} G$ (see also \cite{oinlun08}
for a more general construction)
where $A = \oplus_{e \in \ob(G)} C(e)$.
In Section \ref{skewcategoryalgebras},
we also investigate the connection between
maximal commutativity of $A$ in $A \rtimes^{\sigma} G$
and the way in which ideals intersect $A$ (see Proposition \ref{eqconditions}).
In Section \ref{catdynsyst}, we apply these results
to obtain connections between topological freeness of the category dynamical system $s$,
ideal properties of $A \rtimes^{\sigma} G$
and maximal commutativity of $A$
in $A \rtimes^{\sigma} G$ (see Theorems \ref{manyimplications}, \ref{equivalent} and \ref{discrete}).
In particular, in the end of the article,
we show the following generalization
of Theorem \ref{maintheorem} from the additive group of
integers to a large class of groupoids.

\begin{thm}\label{genmaintheorem}
Suppose that $s : G \rightarrow \Top^{\op}$ is a groupoid
dynamical system with the property
that for each $e \in \ob(G)$, the group of all
morphisms $e \rightarrow e$
is countable and $s(e)$ is Tychonoff and Baire.
If we put $A = \oplus_{e \in \ob(G)} C(e)$,
then the following assertions are equivalent:
\begin{enumerate}[{\rm (i)}]
\item $s$ is topologically free;

\item if $I$ is a nonzero ideal of $A \rtimes^{\sigma} G$, then $I \cap A \neq \{ 0 \}$;

\item the ring $A$ is a maximal abelian complex subalgebra of $A \rtimes^{\sigma} G$.
\end{enumerate}
\end{thm}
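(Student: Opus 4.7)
The plan is to use Proposition \ref{eqconditions} to dispatch (ii) $\Leftrightarrow$ (iii) unconditionally (both are expressions of the fact that a commutative subring meets every nonzero ideal iff it coincides with its own commutant in the overring), thereby reducing the theorem to the equivalence (i) $\Leftrightarrow$ (iii). Throughout, I would exploit two preliminary reductions. First, comparing the $u_n$-coefficients in the identity $1_e b = b 1_e$, where $1_e \in C(e) \subset A$ is the characteristic idempotent of $s(e)$, forces $b_n = 0$ whenever $d(n) \neq c(n)$; hence only isotropy morphisms (members of some $G_e$) can enter a centralizing element. Second, because $G$ is a groupoid, each $n \in G_e$ is invertible, so $s(n)$ is a self-homeomorphism of $s(e)$, which turns every connected piece of the analysis into a genuine group action of $G_e$ on a single Tychonoff Baire space.

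For (i) $\Rightarrow$ (iii), suppose $b = \sum_n b_n u_n \in A \rtimes^{\sigma} G$ commutes with every element of $A$. By the reduction above, each $n$ in the support of $b$ lies in some $G_e$ with $b_n \in C(e)$. Commuting $b$ with an arbitrary $f \in C(e)$ and comparing $u_n$-coefficients yields $b_n \cdot \sigma(n)(f) = f \cdot b_n$ in $C(e)$. If $n$ is not the identity of $G_e$ and $b_n \neq 0$, pick in the nonempty open set $\{x : b_n(x) \neq 0\}$ an aperiodic point $x$, which exists by topological freeness; then $s(n)(x) \neq x$, and the Tychonoff property supplies $f \in C(e)$ with $f(x) \neq f(s(n)(x)) = (\sigma(n)(f))(x)$, contradicting the displayed identity at $x$. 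Hence $b_n = 0$ for every nonidentity $n$ and $b \in A$.

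For (iii) $\Rightarrow$ (i), argue the contrapositive. Suppose $s$ is not topologically free, so for some $e \in \ob(G)$ the aperiodic subset of $s(e)$ fails to be dense. Write this subset as $\bigcap_{n \in G_e \setminus \{e\}}(s(e) \setminus F_n)$, where $F_n = \{x \in s(e) : s(n)(x) = x\}$ is closed by continuity of $s(n)$ and Hausdorffness of $s(e)$. The countability of $G_e$ combined with the Baire property of $s(e)$ forces some $F_n$ (for a nonidentity $n$) to have nonempty interior $U$. Using the Tychonoff property, choose a nonzero $b_n \in C(e)$ supported inside $U$. Since $s(n)|_U = \id_U$, we have $b_n \sigma(n)(f) = b_n f$ for every $f \in C(e)$, and the idempotent computation above shows $b_n u_n$ annihilates the components of $A$ on other objects on the appropriate side; a direct check then gives that $b_n u_n$ commutes with every element of $A$. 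But $b_n u_n \notin A$ because $n$ is not an identity morphism, so $A$ is not maximal abelian in $A \rtimes^{\sigma} G$.

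The step I expect to be the main obstacle is the careful bookkeeping in the coefficient-comparison argument for (i) $\Rightarrow$ (iii): the algebra $A \rtimes^{\sigma} G$ is simultaneously graded by $\mor(G)$ (via the $u_n$'s) and decomposed over $\ob(G)$ (via $A = \bigoplus_e C(e)$), and the idempotents $1_e$ do not sum to a global unit when $\ob(G)$ is infinite, so one cannot argue via a two-sided identity. The nonisotropy reduction and the matching of domains and codomains must be performed precisely, and one has to verify that multiplying by the projections $1_{d(n)}$ on the right and $1_{c(n)}$ on the left is enough to isolate each homogeneous component. A secondary delicate point is to verify that the hypothesis of countability of $G_e$ enters exactly once, at the Baire category step, and that both countability and the Baire property are genuinely required to pass from the failure of density of the aperiodic set to the existence of a single morphism with a nonempty-interior fixed set.
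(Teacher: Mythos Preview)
Your proposal is correct and follows essentially the same route as the paper's underlying machinery (Theorem \ref{manyimplications}, to which Theorem \ref{genmaintheorem} ultimately reduces): (ii) $\Leftrightarrow$ (iii) via Proposition \ref{eqconditions}(c), (i) $\Rightarrow$ (iii) via density of aperiodic points together with separation of points by continuous functions, and (iii) $\Rightarrow$ (i) via the Baire-category argument producing a fixed-point set with nonempty interior and a bump function supported therein. You have simply unwound those auxiliary results into a self-contained direct argument, and your remarks about where countability of $G_e$ and the Tychonoff hypothesis enter are accurate.
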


\section{Skew Category Algebras}\label{skewcategoryalgebras}

In this section, we define skew category systems
$(A,G,\sigma)$ (see Definition \ref{defskewcategorysystem})
and skew category algebras $A \rtimes^{\sigma} G$
(see Definition \ref{defskewcategoryalgebra}).
Thereafter, we investigate the existence of
identity elements in skew category algebras
(see Proposition \ref{unit}).
In the end of this section, we investigate
the relation between maximal commutativity
of $A$ and the ideal
intersection property
in skew category algebras $A \rtimes^{\sigma} G$ (see Proposition \ref{commutativeA}
and Proposition \ref{eqconditions}).
These results will be applied to
category dynamical systems in Section \ref{catdynsyst}.

\begin{conventions}
Let $R$ be an associative ring.
The identity map $R \rightarrow R$ is denoted by $\id_R$.
If $R$ is unital then the identity element of $R$
is nonzero and is denoted by $1_R$.
The category of unital rings is denoted by $\Ring$.
We say that a subset $R'$ of $R$ is a subring of $R$
if it is itself a ring under the binary operations of $R$.
We always assume that ring homomorphisms between
unital rings respect the identity elements.
If $A$ is a subset of $R$, then the {\it commutant} of $A$
in $R$ is the set of elements of $R$ that commute
with every element of $A$.
If $A$ is a commutative subring of $R$, then
$A$ is called {\it maximal commutative} in $R$
if the commutant of $A$ in $R$ equals $A$.
All ideals of rings are supposed to be two-sided.
If $R$ is commutative and $x \in R$, then $\Ann(x)$
denotes the ideal of $R$ consisting of all $y \in R$
satisfying $xy = 0$.
\end{conventions}

\begin{defn}\label{defskewcategorysystem}
By a \emph{skew category system} we mean a
triple $(A,G,\sigma)$ where $G$ is a (small) category,
$A$ is the direct sum of unital rings $A_e$, for $e \in \ob(G)$,
and $\sigma$ is a functor $G \rightarrow \Ring$ satisfying
$\sigma(n) : A_{d(n)} \rightarrow A_{c(n)}$, for $n \in G$.
\end{defn}

\begin{rem}\label{intermsofmaps}
Suppose that $(A,G,\sigma)$ is a skew category system.
The fact that $\sigma$ is a functor $G \rightarrow \Ring$
can be formulated in terms of maps by saying that
\begin{equation}\label{idd}
\sigma(e) = {\rm id}_{A_e},
\end{equation}
for all $e \in \ob(G)$, and
\begin{equation}\label{algebraa}
\sigma(m) \sigma(n) = \sigma(mn),
\end{equation}
for all $(m,n) \in G^{(2)}$.
\end{rem}

\begin{defn}\label{defskewcategoryalgebra}
If $(A,G,\sigma)$ is a skew category system, then
we let $A \rtimes^{\sigma} G$ denote the collection of formal sums
$\sum_{n \in G} a_n u_n$, where $a_n \in A_{c(n)}$, $n \in G$,
are chosen so that all but finitely many of them are nonzero.
Define addition and multiplication on $A \rtimes^{\sigma} G$ by
\begin{equation}\label{addition}
\left( \sum_{n \in G} a_n u_n \right) +
\left( \sum_{n \in G} b_n u_n \right) =
\sum_{n \in G} \left( a_n + b_n \right) u_n
\end{equation}
respectively
\begin{equation}\label{multiplication}
\left( \sum_{n \in G} a_n u_n \right)
\left( \sum_{n \in G} b_n u_n \right) =
\sum_{n \in G} \left( \sum_{\stackrel{(m,m') \in G^{(2)};}{mm' = n}}
a_m \sigma(m)(b_{m'}) \right) u_n
\end{equation}
for $\sum_{n \in G} a_n u_n$, $\sum_{n \in G} b_n u_n \in A \rtimes^{\sigma} G$.
It is clear that these operations define
a (sometimes nonunital, see Proposition \ref{unit})
ring structure on $A \rtimes^{\sigma} G$.
We call $A \rtimes^{\sigma} G$ the \emph{skew category algebra}
defined by $(A,G,\sigma)$.
\end{defn}

\begin{rem}
If $G$ is a groupoid, then (\ref{multiplication})
can be rewritten in the following slightly simpler form
\begin{equation}\label{groupoidproduct}
\left( \sum_{n \in G} a_n u_n \right)
\left( \sum_{n \in G} b_n u_n \right) =
\sum_{n \in G} \left( \sum_{\stackrel{m \in G;}{c(m)=c(n)}}
a_m \sigma(m)(b_{m^{-1}n}) \right) u_n.
\end{equation}
If $G$ equals the additive group ${\Bbb Z}$,
then (\ref{groupoidproduct}) coincides with (\ref{zproduct}).
\end{rem}

\begin{rem}
Suppose that $T := A \rtimes^{\sigma} G$ is a skew category algebra.
If we for each $n \in G$, put $T_n = A_{c(n)} u_n$,
then $T = \oplus_{n \in G} T_n$,
$T_m T_n = T_{mn}$, for $(m,n) \in G^{(2)}$,
and $T_m T_n = \{ 0 \}$, otherwise.
In the language of \cite{liu06}, \cite{lu06} and \cite{oinlun10}
this means that a skew category algebra is a strongly category
graded ring.
\end{rem}

\begin{defn}
Given a nonzero element $x = \sum_{n \in G} a_n u_n$
in the skew category algebra $A \rtimes^{\sigma} G$,
we put $$\dom(x) = \{ d(n) \mid n \in G \ {\rm and} \ a_n \neq 0 \}$$ and
$$\cod(x) = \{ c(n) \mid n \in G \ {\rm and} \ a_n \neq 0 \}.$$
\end{defn}

\begin{prop}\label{unit}
Given a nonzero element $x$ in the skew category algebra $A \rtimes^{\sigma} G$,
there are unique elements $l(x)$ and $r(x)$ in $A \rtimes^{\sigma} G$
subject to the condition that $l(x)y = y$ and $z r(x) = z$ for all
elements $y$ and $z$ in $A \rtimes^{\sigma} G$ satisfying $\cod(x)=\cod(y)$
and $\dom(x)=\dom(z)$, namely $l(x) = \sum_{e \in \cod(x)} 1_{A_e} u_e$
and $r(x) = \sum_{e \in \dom(x)} 1_{A_e} u_e$.
In particular, this implies that $A \rtimes^{\sigma} G$ is unital if and
only if $\ob(G)$ is finite;
in that case, the multiplicative identity of $A \rtimes^{\sigma} G$
is $\sum_{e \in \ob(G)} 1_{A_e} u_e$.
\end{prop}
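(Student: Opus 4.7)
The plan is to verify existence by direct application of the multiplication rule (\ref{multiplication}), to deduce uniqueness by evaluating the identity property at $y = l(x)$, and to derive the unital characterization as a consequence.

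For the existence of $l(x)$, fix $y = \sum_n b_n u_n$ with $\cod(y) = \cod(x)$ and expand $l(x)\cdot y$ via (\ref{multiplication}). Since the only morphisms $m$ for which $l(x) = \sum_{e \in \cod(x)} 1_{A_e} u_e$ has a nonzero coefficient are the identity morphisms $e \in \cod(x)$, the composability condition $(m, m') \in G^{(2)}$ forces $e = c(m')$, and $em' = p$ collapses to $m' = p$. Using $\sigma(e) = \id_{A_e}$ from (\ref{idd}), the $p$-th coefficient of $l(x) y$ reduces to $1_{A_{c(p)}} b_p$ when $c(p) \in \cod(x)$ and to $0$ otherwise. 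Since $\cod(y) = \cod(x)$, every index $p$ with $b_p \neq 0$ satisfies $c(p) \in \cod(x)$, whence $(l(x) y)_p = b_p$ identically, i.e., $l(x) y = y$. The verification that $z\cdot r(x) = z$ for $\dom(z) = \dom(x)$ is symmetric, with identity morphisms now appearing on the right of the convolution and $\sigma(m)(1_{A_{d(m)}}) = 1_{A_{c(m)}}$ used in place of (\ref{idd}).

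For uniqueness, suppose $l' \in A \rtimes^\sigma G$ also satisfies $l'y = y$ for every $y$ with $\cod(y) = \cod(x)$. Taking $y = l(x)$, which is admissible since $\cod(l(x)) = \cod(x)$, yields $l' \cdot l(x) = l(x)$. A mirror convolution calculation exploits that $l(x)$'s support consists of identities, reducing the sum in (\ref{multiplication}) to the constraint $m' = d(m)$, whence $mm' = m$ and the index $p = m$. Writing $l' = \sum_m c_m u_m$ and invoking $\sigma(m)(1_{A_{d(m)}}) = 1_{A_{c(m)}}$, one obtains $(l' \cdot l(x))_k = c_k$ whenever $d(k) \in \cod(x)$. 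Matching with the coefficients of $l(x)$ then forces $c_e = 1_{A_e}$ for each $e \in \cod(x)$ and $c_k = 0$ for every non-identity $k$ with $d(k) \in \cod(x)$; this identifies $l(x) = \sum_{e \in \cod(x)} 1_{A_e} u_e$ as the canonical element with the required property, and the dual argument handles $r(x)$.

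For the unital characterization, if $\ob(G)$ is finite, put $E := \sum_{e \in \ob(G)} 1_{A_e} u_e$. Repeating the existence computation, noting that $c(p) \in \ob(G)$ holds trivially for every morphism $p$, gives $Ey = y$ and, dually, $yE = y$ for every $y \in A \rtimes^\sigma G$, so $E$ is a two-sided identity. Conversely, if $A \rtimes^\sigma G$ possesses an identity $1 = \sum_m c_m u_m$, then specializing $1 \cdot (1_{A_e} u_e) = 1_{A_e} u_e$ at each $e \in \ob(G)$ and applying the convolution computation from the uniqueness step forces $c_e = 1_{A_e}$. Since $1$ is a finite formal sum, $\ob(G)$ must be finite, and then necessarily $1 = \sum_{e \in \ob(G)} 1_{A_e} u_e$. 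The main obstacle throughout is the bookkeeping in (\ref{multiplication}): correctly exploiting the composability constraint together with the ``projector'' behavior of identity morphisms and $\sigma(e) = \id_{A_e}$; once this collapse of the convolution is executed, each remaining step reduces to routine coefficient matching.
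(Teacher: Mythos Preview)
Your argument mirrors the paper's: both verify the explicit formulas directly via (\ref{multiplication}) and then check uniqueness by testing the left-identity property on the single element $y = \sum_{e \in \cod(x)} 1_{A_e} u_e$ (which you call $l(x)$). The only cosmetic difference is that you treat existence first, whereas the paper leads with uniqueness.

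There is, however, a genuine gap in your uniqueness step, and the paper's own proof shares it. Evaluating $l' \cdot l(x) = l(x)$ only constrains the coefficients $c_k$ of $l'$ for those $k$ with $d(k) \in \cod(x)$; you record this restriction yourself and then stop short of full uniqueness, calling $l(x)$ merely ``canonical''. Nothing is said about $c_m$ when $d(m) \notin \cod(x)$, and such terms are in fact invisible to the test: if $d(m) \notin \cod(x)$ then $u_m \cdot y = 0$ for every $y$ with $\cod(y) = \cod(x)$ (the composability constraint in (\ref{multiplication}) is never met), so $l(x) + a_m u_m$ is equally a left identity for all such $y$. Thus uniqueness as literally stated in the Proposition fails; what both you and the paper actually establish is uniqueness among elements supported on morphisms with domain in $\cod(x)$. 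The unital characterization in the final sentence is unaffected, since there the relevant set of objects is all of $\ob(G)$.
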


\begin{proof}
Suppose that $x = \sum_{n \in G} a_n u_n$
is an element of $A \rtimes^{\sigma} G$
satisfying $l(x)y = y$ and $z r(x) = z$ for all
elements $y$ and $z$ in $A \rtimes^{\sigma} G$
with $\cod(x)=\cod(y)$ and $\dom(x)=\dom(z)$.
Suppose that $l(x) = \sum_{n \in G} b_n u_n$
for some $b_n \in A_{c(n)}$ satisfying $b_n = 0$
for all but finitely many $n \in G$.
If we put $y = \sum_{e \in \cod(x)} 1_{A_e} u_e$,
then $\cod(x)=\cod(y)$. Hence, by the assumptions,
we get that
$\sum_{e \in \cod(x)} 1_{A_e} u_e = y = l(x)y =
\left( \sum_{n \in G} b_n u_n \right)
\left( \sum_{e \in \cod(x)} 1_{A_e} u_e \right) =
\sum_{e \in \cod(x)} \sum_{n \in G, d(n)=e} b_n u_n$.
This implies that $b_n = 1_{A_{c(n)}}$, for $n \in \cod(x)$,
and $b_n = 0$, otherwise.
This means that $l(x) = \sum_{e \in \cod(x)} 1_{A_e} u_e$.
The statement concerning $r(x)$ is treated similarly.
On the other hand, it is clear that $l(x)$ and $r(x)$
as defined above satisfy the required property.
The last part follows immediately from the above.
\end{proof}


\begin{prop}\label{commutativeA}
If $A \rtimes^{\sigma} G$ is a skew
category algebra with $A$ commutative, then
\begin{enumerate}[{\rm (a)}]
\item the commutant of $A$ in $A \rtimes^{\sigma} G$
is the collection of elements of $A \rtimes^{\sigma} G$
of the form
$\sum_{e \in \ob(G)} \sum_{n \in G_e} a_n u_n$
satisfying $\sigma(n)(a)-a \in \Ann(a_n)$, for all $e \in \ob(G)$,
all $n \in G_e$ and all $a \in A_e$;

\item the subring $A$ is maximal commutative in
$A \rtimes^{\sigma} G$ if and only if
there for all $e \in \ob(G)$,
all nonidentity $n \in G_e$ and all nonzero $a_n \in A_e$,
is a nonzero $a \in A_e$ with the
property that $\sigma(n)(a)-a \notin \Ann(a_n)$;

\item if every $A_e$, for $e \in \ob(G)$, is an integral domain,
then $A$ is maximal commutative in $A \rtimes^{\sigma} G$ if and only if
there for every $e \in \ob(G)$ and every nonidentity $n \in G_e$
is $a \in A_e$ with the property that $\sigma(n)(a) \neq a$.
\end{enumerate}
\end{prop}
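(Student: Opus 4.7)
The plan is to prove (a) by a direct computation and to derive (b) and (c) from it as routine reformulations. For (a), fix an element $x = \sum_{n \in G} a_n u_n$ of $A \rtimes^{\sigma} G$. Since $A = \oplus_{e \in \ob(G)} A_e$ sits inside $A \rtimes^{\sigma} G$ via $a \in A_e \mapsto a u_e$, it suffices to determine when $x$ commutes with every element of the form $b u_e$ with $e \in \ob(G)$ and $b \in A_e$. Using the multiplication formula from Definition \ref{defskewcategoryalgebra} together with (\ref{idd}), one computes
\[ (b u_e) x = \sum_{\substack{n \in G \\ c(n) = e}} b a_n \, u_n, \qquad x (b u_e) = \sum_{\substack{n \in G \\ d(n) = e}} a_n \sigma(n)(b) \, u_n. \]
Equating coefficients of $u_n$ and letting $e$ range over $\ob(G)$ yields a separate condition for each $n \in G$. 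If $c(n) \neq d(n)$, then taking $e = c(n)$ and $b = 1_{A_e}$ in the comparison forces $a_n = 0$; hence any $x$ in the commutant is supported on $\bigcup_{e \in \ob(G)} G_e$. For $n \in G_e$ the equation reads $b a_n = a_n \sigma(n)(b)$ for every $b \in A_e$, which by commutativity of $A$ rearranges to $(\sigma(n)(b) - b) a_n = 0$, i.e.\ $\sigma(n)(b) - b \in \Ann(a_n)$. This is precisely the characterization in (a).

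Statement (b) is then a direct reformulation. The commutant of $A$ always contains $A$ itself, corresponding to elements supported on the identity morphisms, where the condition is vacuous by (\ref{idd}). Hence $A$ fails to be maximal commutative if and only if some element of the commutant has a nonzero coefficient $a_n$ at a nonidentity $n \in G_e$; by (a) this says that $\sigma(n)(a) - a \in \Ann(a_n)$ for \emph{every} $a \in A_e$. Negating this statement produces exactly the condition in (b); the witness $a$ is automatically nonzero since $a = 0$ trivially lies in $\Ann(a_n)$. For (c), if each $A_e$ is an integral domain and $a_n \neq 0$, then $\Ann(a_n) = \{0\}$, so the condition ``$\sigma(n)(a) - a \in \Ann(a_n)$'' collapses to ``$\sigma(n)(a) = a$,'' which no longer involves $a_n$; the condition in (b) therefore simplifies to the form displayed in (c).

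The one delicate point is the treatment of off-diagonal morphisms in (a), namely those $n$ with $c(n) \neq d(n)$: one must argue that the corresponding $a_n$ are forced to zero by the commutation constraint, even though they do not appear in the final characterization. This requires testing commutation at the object $e = c(n)$ (or, symmetrically, at $e = d(n)$, which uses that $\sigma(n)$ preserves the multiplicative identity) in order to decouple the two summands above; without this step one would be left with spurious constraints that muddy parts (b) and (c). Once this bookkeeping is settled, the remainder of the argument is mechanical.
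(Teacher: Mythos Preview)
Your proof is correct and follows essentially the same approach as the paper: test commutation against elements $b u_e$ (the paper uses $1_{A_e}u_e$ first to kill off-diagonal terms, then general $a u_e$ for the annihilator condition), and derive (b) and (c) as immediate reformulations. Your treatment is slightly more explicit about which object $e$ to use in the off-diagonal step, but the argument is the same.
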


\begin{proof}
Let the commutant of $A$ in $A \rtimes^{\sigma} G$
be denoted by $A'$.

(a) Take $x = \sum_{n \in G} a_n u_n$ in $A'$.
From the fact that the equality $1_{A_e}u_e x = x 1_{A_e}u_e$
holds for all $e \in \ob(G)$, it follows that
$a_n = 0$ whenever $d(n) \neq c(n)$.
Take $e \in \ob(G)$ and $a \in A_e$.
From the fact that the equality $au_e x = x au_e$ holds
it follows that if $n \in G_e$, then
$a a_n = \sigma(n)(a) a_n$, that is
that $a - \sigma(n)(a) \in \Ann(a_n)$.
On the other hand,
it is clear that elements of $A \rtimes_{\alpha}^{\sigma} G$
of the form $\sum_{e \in \ob(G)} \sum_{n \in G_e} a_n u_n$
satisfying $\sigma(n)(a)-a \in \Ann(a_n)$, for $e \in \ob(G)$,
$n \in G_e$ and $a \in A_e$, belongs to $A'$.

(b) follows from (a) and (c) follows from (b).
\end{proof}

\begin{prop}\label{eqconditions}
Suppose that $A \rtimes^{\sigma} G$ is a skew
category algebra with $A$ commutative. Consider the following assertions:
\begin{enumerate}[{\rm (i)}]
\item if $I$ is a nonzero ideal of $A \rtimes^{\sigma} G$,
then $I \cap A \neq \{ 0 \}$;

\item the subring $A$ is maximal commutative in $A \rtimes^{\sigma} G$.
\end{enumerate}
Then:
\begin{enumerate}[{\rm (a)}]
\item {\rm (i)} implies {\rm (ii)};

\item {\rm (ii)} does not imply {\rm (i)} for all categories $G$;

\item if $G$ is a groupoid, then {\rm(i)} holds if and only if {\rm (ii)} holds.
\end{enumerate}
\end{prop}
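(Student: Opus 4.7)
The proposition has three parts, and I would attack each with a different technique. For (a), I plan to proceed by contrapositive, constructing from a witness to the failure of maximal commutativity an explicit ideal missing $A$. For (b), I would exhibit a concrete counterexample in a one-object monoid with non-invertible dynamics. For (c), I would use a minimal-support absorption argument that crucially relies on the existence of inverses in a groupoid. Throughout, the explicit description of the commutant $A'$ in Proposition~\ref{commutativeA}(a) will be the key tool.

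For (a), assume $A$ is not maximal commutative. By Proposition~\ref{commutativeA}(b), pick $e \in \ob(G)$, a non-identity $n \in G_e$, and $a_n \in A_e \setminus \{0\}$ with $\sigma(n)(a) - a \in \Ann(a_n)$ for all $a \in A_e$; then $a_n u_n \in A'$, so $y := a_n u_n - a_n u_e$ is a nonzero element of $A' \setminus A$. I would show the two-sided ideal $(y)$ satisfies $(y) \cap A = \{0\}$. The core calculation uses the commutant identity $a_n \sigma(n)(b) = a_n b$ for $b \in A_e$ to rewrite any product $r y s$, with $r = \sum_m r_m u_m$ and $s = \sum_k s_k u_k$, as the telescoping combination $\sum_{m, k} r_m \sigma(m)(a_n s_k)(u_{mnk} - u_{mk})$; a finite-support bookkeeping argument then rules out any nonzero element of $A$ in this form. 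For (b), I would take $G$ to be the one-object monoid $(\mathbb{N}, +)$, $A = \mathbb{C}[y]$, and $\sigma(k) : y \mapsto y + k$. Any commutant element $x = \sum a_k u_k$ satisfies $a_k(y - (y+k)) = -k a_k = 0$ in $\mathbb{C}[y]$, forcing $a_k = 0$ for $k \geq 1$, so $A$ is maximal commutative; yet the two-sided ideal generated by $u_1$ sits inside $\bigoplus_{k \geq 1} A u_k$ and so is a nonzero ideal disjoint from $A$, violating (i).

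For (c), let $G$ be a groupoid, assume $A$ is maximal commutative, and let $I$ be a nonzero ideal. Choose $x \in I \setminus \{0\}$ minimizing $|\Supp(x)|$, where $\Supp(x) := \{n : a_n \ne 0\}$. Since $1_{A_e} u_e \cdot x \cdot 1_{A_f} u_f \in I$ equals the restriction of $x$ to $\Hom_G(f, e)$, minimality forces $\Supp(x) \subseteq \Hom_G(f, e)$ for some $e, f \in \ob(G)$. Pick $n_0 \in \Supp(x)$ and set $x' := x \cdot 1_{A_f} u_{n_0^{-1}} \in I$; this preserves the minimal support size while concentrating $\Supp(x')$ inside $G_e$ and placing $e \in \Supp(x')$. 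For any $a \in A_e$, the commutator $[a u_e, x'] \in I$ has coefficient $b_e(a - \sigma(e)(a)) = 0$ at $u_e$ (since $\sigma(e) = \id$), hence strictly smaller support than $x'$; minimality forces it to vanish. Since $x'$ commutes trivially with $A_{f'}$ for $f' \ne e$ (its support is in $G_e$), we obtain $x' \in A'$, and maximal commutativity gives $x' \in A$; as $x' \ne 0$, $I \cap A \ne \{0\}$. The main obstacle will be the bookkeeping in (a): in an arbitrary category one must carefully track possible coincidences between composites $mnk$ and $m'k'$ arising from different index pairs, as well as cancellations among $A$-coefficients. An alternative tactic would be to construct a ring homomorphism $R \to T$ vanishing on $y$ and restricting injectively to $A$, though the skew relation $u_m b = \sigma(m)(b) u_m$ obstructs the naive augmentation $u_m \mapsto 1$ whenever $\sigma(m)$ is nontrivial, so such a homomorphism would have to be tailored carefully to $\Ann(a_n)$.
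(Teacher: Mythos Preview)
Your overall strategy matches the paper's proof closely: contrapositive plus an explicit ideal for (a), a monoid counterexample over $(\mathbb{N},+)$ for (b), and a minimal-support argument exploiting inverses for (c). Parts (b) and (c) are complete and correct. Your example in (b) (the translation action $y\mapsto y+k$ on $\mathbb{C}[y]$) differs from the paper's (the scaling action $X\mapsto z^kX$ with $z$ not a root of unity), but both work for the same reason: $A$ is an integral domain and each nonidentity $\sigma(k)$ moves some element, so Proposition~\ref{commutativeA}(c) gives maximal commutativity, while the ideal generated by $u_1$ lives in positive degree. Your argument for (c) is essentially the paper's; the paper phrases the minimality over $I\setminus C$ rather than over $I\setminus\{0\}$, but the mechanics are identical.

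The one place your proposal is incomplete is exactly where you flag it: the ``bookkeeping'' in (a). You correctly rewrite a generic element of the ideal as a finite sum of terms $c_{m,k}(u_{mnk}-u_{mk})$, but tracking coincidences among the composites directly is awkward, and your proposed alternative of a \emph{ring} homomorphism killing $y$ indeed runs into the skew relation. The paper's resolution is simpler than either route: use the map $\varphi:A\rtimes^{\sigma}G\to A$ defined additively by $\varphi(xu_t)=x$, which is only a homomorphism of \emph{abelian groups}, not of rings. Each pair $c\,u_{mnk}-c\,u_{mk}$ has the same coefficient $c\in A_{c(m)}$ on both terms, so $\varphi$ annihilates it; hence $I\subseteq\ker\varphi$. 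Since $\varphi|_A=\id_A$, this immediately gives $I\cap A=\{0\}$. In other words, ``sum all the coefficients'' is the bookkeeping invariant you were looking for, and dropping the requirement that the map be multiplicative removes the obstruction you identified.
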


\begin{proof}
(a) We show the contrapositive statement. Suppose that $A$ is not
maximal commutative in $A \rtimes^{\sigma} G$.
Then, by Proposition \ref{commutativeA}(b),
there exists some $e \in \ob(G)$, some nonidentity
$n \in G_e$ and some nonzero $a_n \in A_e$
with the property that $\sigma(n)(a)-a \in \Ann(a_n)$
for all $a \in A_e$, that is such that
$a_n \sigma(n)(a) = a_n a$ for all $a \in A_e$.
Let $I$ be the nonzero ideal of $A \rtimes^{\sigma} G$ generated
by the element $a_n u_e - a_n u_n$
and define the homomorphism of abelian groups
$\varphi : A \rtimes^{\sigma} G \rightarrow A$
by the additive extension of the relation
$\varphi(x u_t) = x$, for $t \in G$ and $x \in A_{c(t)}$.
We claim that $I \subseteq \ker(\varphi)$.
If we assume that the claim holds, then,
since $\varphi |_A = {\rm id}_A$,
it follows that
$A \cap I = \varphi |_A (A \cap I)
\subseteq \varphi(I) = \{ 0 \}$.
Now we show the claim.
By the definition of $I$ it follows that
it is enough to show that $\varphi$
maps elements of the form
$x u_r (a_n u_e - a_n u_n) y u_t$ to zero,
where $x \in A_{c(r)}$, $y \in A_{c(t)}$
and $r,t \in G$ satisfy $d(r)=e=c(t)$.
However, since $a_n \sigma(n)(a) = a_n a$ for all $a \in A_e$,
we get that
$x u_r (a_n u_e - a_n u_n) y u_t =
x u_r (a_n y u_t - a_n y u_n u_t)=
x u_r (a_n y u_t - a_n y u_{nt})=
x \sigma(r)(a_n y) u_{rt} - x \sigma(r)(a_n y) u_{rnt}$
which, obviously, is mapped to zero by $\varphi$.

(b) We will show that there even are monoids
for which (ii) does not imply (i).
Let ${\Bbb N}$ denote the nonnegative integers
equipped with addition as operation and zero as a neutral element.
Let $A$ denote the polynomial ring ${\Bbb C}[X]$
and fix a complex number $z$ which is not a root of
unity, that is such that $z^n \neq 1$ for all positive
integers $n$. For each $n \in {\Bbb N}$ define
$\sigma(n) : A \rightarrow A$ by
$\sigma(n)(p(X)) = p(z^n X)$, for $p(X) \in A$.
It is clear that each $\sigma(n)$, for $n \in {\Bbb N}$,
is a ring endomorphism of $A$.
It is easy to check that if $p(X)$ is any nonconstant
polynomial and $n$ is a positive integer,
then $\sigma(n)(p(X)) \neq p(X)$.
Since $A$ is an integral domain, this implies,
by Proposition \ref{commutativeA}(c), that $A$
is maximal commutative in $A \rtimes^{\sigma} G$.
However, if we let $I$ denote the ideal generated
by $u_1$, then it is clear that $A \cap I = \{ 0 \}$.

(c) By (a) it follows that (i) implies (ii).
Now we show the contrapositive statement of (ii) implies (i).
Let $C$ denote the commutant
of $A$ in $A \rtimes^{\sigma} G$ and suppose that $I$ is a
twosided ideal of $A \rtimes^{\sigma} G$ with the property that $I
\cap C = \{ 0 \}$. We wish to show that $I = \{ 0 \}$. Take $x \in
I$. If $x \in C$, then by the assumption $x = 0$. Therefore we now
assume that $x  = \sum_{s \in G} a_s u_s  \in I$, $a_s \in
A_{c(s)}$, $s \in G$, and that $x$ is chosen so that $x \notin C$
with the set $S := \{ s \in G \mid a_s \neq 0 \}$ of least possible
cardinality $N$. Seeking a contradiction, suppose that $N$ is
positive. First note that there is $e \in c(x)$ with $u_e x \in I
\setminus C$. In fact, if $u_e x \in C$ for all $e \in c(x)$,
then $x = l(x)x = \sum_{e \in c(x)} u_e x \in C$ which is a
contradiction. By minimality of $N$ we can assume that $c(s)=e$, $s
\in S$, for some fixed $e \in \ob(G)$. Take $t \in S$ and consider
the nonzero element $x' := x u_{t^{-1}} \in I$. Since
$I \cap C = \{ 0 \}$, we get that $x' \in I \setminus C$. Take $a =
\sum_{f \in \ob(G)} b_f u_f \in A$. Then $I \ni x'' := ax' - x'a =
\sum_{s \in S} a_s(b_{d(s)} - \sigma_s(b_e)) u_s $. Since the
summand for $s = e$ vanishes, we get, by the assumption on $N$, that
$x'' = 0$. Since $a \in A$ was arbitrarily chosen, we get that $x'
\in C$ which is a contradiction. Therefore $N = 0$ and hence $S =
\emptyset$ which in turn implies that $x=0$. Since $x \in I$ was
arbitrarily chosen, we finally get that $I = \{ 0 \}$.
\end{proof}

\begin{rem}
For other results related to the implication
(i) implies (ii) in Proposition \ref{eqconditions},
see \cite{oinlun08}.
The implication (ii) implies (i) in Proposition \ref{eqconditions}
actually holds for all nondegenerate groupoid graded rings,
see \cite{oinlun10}.
\end{rem}

\section{Category Dynamical Systems}\label{catdynsyst}

In this section, we show generalizations of
Theorem \ref{maintheorem} to the case of
skew category algebras defined by \emph{category
topological dynamical systems}
(see Theorem \ref{manyimplications}, Theorem \ref{equivalent}
and Theorem \ref{discrete}).
We use these results to prove Theorem \ref{genmaintheorem}.
To this end, we apply results from the
previous section to show results about
the commutant of $A$ in the
corresponding skew category algebra $A \rtimes^{\sigma} G$
(see Proposition \ref{maxcomcondition}).
In the end of this section, we discuss the implications
of these results for the connection between
dynamical systems defined by partially defined functions
(see Definition \ref{partialdefn}) and properties
of the corresponding skew category algebras
(see Example \ref{partialexmp}).

\begin{conventions}
Let $s : G \rightarrow \Top^{\op}$ be a category dynamical system
and suppose that $R$ denotes a commutative unital topological ring,
that is a commutative unital ring
which is also a topological space such that both
the addition and the multiplication are continuous as maps
$R \times R \rightarrow R$, where
$R \times R$ carries the product topology.
Take $e \in \ob(G)$. We let
$C(s(e),R)$ denote the $R$-algebra of continuous
maps $s(e) \rightarrow R$;
we let $1_e$ denote the
map $s(e) \rightarrow R$ that sends each $x \in s(e)$ to $1_R$;
if $R$ equals the complex numbers,
then we let $C(s(e),R)$ be denoted by $C(e)$.
For each $n \in G$, we let
$\sigma(n)$ denote the function
$C(s(d(n)),R) \rightarrow C(s(c(n)),R)$
defined by $\sigma(n)(f) = f \circ s(n)$,
for $f \in C(s(d(n)),R)$.
For the rest of the article, we suppose that
\begin{enumerate}[(i)]
\item $A_e$ denotes a subring of $C(s(e),R)$;

\item if $n \in G$ and $f \in A_{d(n)}$, then
$\sigma(n)(f)$ belongs to $A_{c(n)}$;

\item $A$ denotes the direct sum $\oplus_{e \in \ob(G)}A_e$.
\end{enumerate}
The \emph{support} of $f\in C(s(e),R)$ is denoted by $\Supp(f)$
and is the set of $x \in s(e)$ such that $f(x) \neq 0$.
Furthermore, we say that
a nonempty subset of $s(e)$ is a {\it domain
of uniqueness} for $A_e$ if every function of $A_e$
that vanishes on it, vanishes on the whole of $s(e)$.
We also say that the ring $A_e$ {\it separates elements} of $s(e)$
if for each pair of distinct elements $x$ and $y$ in $s(e)$
there exists a function $f$ in $A_e$ such that $f(x) \neq f(y)$.
\end{conventions}

\begin{prop}
The triple $(A , G , \sigma)$
is a skew category system.
\end{prop}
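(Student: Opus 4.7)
The plan is to unwind Definition \ref{defskewcategorysystem} and verify each clause. The running conventions preceding the proposition supply that each $A_e$ is a unital subring of $C(s(e),R)$ (with identity $1_e$) and that $A = \oplus_{e \in \ob(G)} A_e$ by item (iii). What remains is to check that $\sigma$ is a functor $G \to \Ring$ satisfying $\sigma(n) : A_{d(n)} \rightarrow A_{c(n)}$, i.e.\ that each $\sigma(n)$ is a unital ring homomorphism and that the assignment $n \mapsto \sigma(n)$ respects identities and composition.

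For the ring homomorphism property, fix $n \in G$. Well-definedness of $\sigma(n)$ as a map into $A_{c(n)}$ is hypothesis (ii). Since the ring operations on $C(s(e),R)$ are pointwise, precomposition with the continuous map $s(n) : s(c(n)) \rightarrow s(d(n))$ automatically preserves both addition and multiplication, so $\sigma(n)$ is a ring homomorphism. For the unit, $\sigma(n)(1_{d(n)}) = 1_{d(n)} \circ s(n)$ is the constant function on $s(c(n))$ with value $1_R$, namely $1_{c(n)}$. For functoriality, I would verify the two identities of Remark \ref{intermsofmaps}. For $e \in \ob(G)$, functoriality of $s$ yields $s(e) = \id_{s(e)}$, so $\sigma(e)(f) = f \circ \id_{s(e)} = f$ for every $f \in A_e$, which is (\ref{idd}). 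For $(m,n) \in G^{(2)}$, since $s$ is covariant into $\Top^{\op}$ the identity $s(mn) = s(n) \circ s(m)$ holds as continuous maps $s(c(m)) \rightarrow s(d(n))$ in $\Top$, whence, for $f \in A_{d(n)}$, $\sigma(mn)(f) = f \circ s(n) \circ s(m) = \sigma(m)(\sigma(n)(f))$, giving (\ref{algebraa}).

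The only place where one must be slightly careful is the variance reversal in the composition step: because the target category is $\Top^{\op}$, the order in which the continuous maps $s(m)$ and $s(n)$ are composed is reversed relative to the order of composition of $m$ and $n$ in $G$. Once this bookkeeping is carried out, everything reduces to pointwise evaluation in $R$, and $(A,G,\sigma)$ fulfils Definition \ref{defskewcategorysystem}.
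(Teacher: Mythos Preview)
Your proof is correct and follows the same approach as the paper: both verify the functoriality conditions (\ref{idd}) and (\ref{algebraa}) of Remark \ref{intermsofmaps} by direct computation with the variance reversal coming from $\Top^{\op}$. Your version is in fact slightly more complete, since you also spell out that each $\sigma(n)$ is a unital ring homomorphism (a point the paper leaves implicit), but the core argument is identical.
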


\begin{proof}
We need to check conditions (\ref{idd})
and (\ref{algebraa}) from Remark \ref{intermsofmaps}.
Take $e \in \ob(G)$ and $f \in A_e$.
Then $\sigma(e)(f) = f \circ s(e) = f$.
Therefore $\sigma(e) = \id_{A_e}$.
Take $(m,n) \in G^{(2)}$ and $f \in A_{d(n)}$.
Then $\sigma(m)\sigma(n)(f) = \sigma(m)( f \circ s(n) )=
f \circ s(n) \circ s(m) = f \circ (s(m) \circ_{\op} s(n))=
f \circ s(mn) = \sigma(mn)(f)$.
Therefore $\sigma(m) \sigma(n) = \sigma(mn)$.
\end{proof}

\begin{defn}
If $e \in \ob(G)$, $x \in s(e)$ and $n \in G_e$, then we put
\begin{enumerate}[(i)]
\item $\Sep_{A_e}^n = \{ x \in s(e) \mid$ there is $f \in A_e$
with $f(x) \neq \sigma(n)(f)(x) \}$;

\item $\Per_{A_e}^n = \{ x \in s(e) \mid$ for all $f \in A_e$,
$f(x) = \sigma(n)(f)(x)$ holds$\}$;

\item $\Sep_e^n = \{ x \in s(e) \mid x \neq s(n)(x) \}$;

\item $\Per_e^n = \{ x \in s(e) \mid x = s(n)(x) \}$;

\item $\Per_{A_e}^{\infty} = \bigcap_{n \in G_e \setminus \{ e \} } \Sep_{A_e}^n$;

\item $\Per_e^{\infty} = \bigcap_{ n \in G_e \setminus \{ e \} } \Sep_e^n$;

\item $\orb_e(x) = \{ s(n)(x) \mid n \in G_e \}$.
\end{enumerate}
\end{defn}

\begin{prop}\label{inclusions}
If $e \in \ob(G)$ and $n \in G_e$, then
\begin{enumerate}[{\rm (a)}]
\item $s(e) \setminus \Sep_{A_e}^n = \Per_{A_e}^n$;

\item $s(e) \setminus \Sep_e^n = \Per_e^n$;

\item $s(e) \setminus \Per_{A_e}^n = \Sep_{A_e}^n$;

\item $s(e) \setminus \Per_e^n = \Sep_e^n$;

\item $\Per_e^n \subseteq \Per_{A_e}^n$;

\item $\Sep_{A_e}^n \subseteq \Sep_e^n$;

\item $\Per_e^{\infty} \subseteq \Sep_e^n$.
\end{enumerate}
\end{prop}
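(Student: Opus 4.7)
The plan is to observe that (a)--(d) are pure tautologies coming from reading off the definitions, while (e)--(g) follow from the single identity $\sigma(n)(f)(x) = f(s(n)(x))$ built into the skew category system, together with the definition of $\Per_e^{\infty}$.

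For (a), I would simply negate the quantifier: $x \notin \Sep_{A_e}^n$ means there is no $f \in A_e$ for which $f(x) \neq \sigma(n)(f)(x)$, which is exactly the condition $\forall f \in A_e,\ f(x) = \sigma(n)(f)(x)$ defining $\Per_{A_e}^n$. Statement (b) is the same tautology for $s(n)$ in place of $\sigma(n)$, and (c), (d) follow by taking complements inside $s(e)$ of (a), (b) respectively (each pair partitions $s(e)$).

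For (e), take $x \in \Per_e^n$, so $s(n)(x) = x$. Then for every $f \in A_e$,
\[
\sigma(n)(f)(x) = (f \circ s(n))(x) = f(s(n)(x)) = f(x),
\]
so $x \in \Per_{A_e}^n$. Statement (f) is the contrapositive of (e): using (a) and (c) (equivalently (b) and (d)), $\Sep_{A_e}^n = s(e) \setminus \Per_{A_e}^n \subseteq s(e) \setminus \Per_e^n = \Sep_e^n$. Alternatively, and more directly, if $x \in \Sep_{A_e}^n$ pick $f \in A_e$ with $f(x) \neq \sigma(n)(f)(x) = f(s(n)(x))$; this forces $s(n)(x) \neq x$, i.e.\ $x \in \Sep_e^n$.

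Finally, for (g), the statement is immediate from the definition $\Per_e^{\infty} = \bigcap_{m \in G_e \setminus \{e\}} \Sep_e^m$ whenever $n$ ranges over nonidentity morphisms of $G_e$ (which is the case of interest for the later results; note that for $n = e$ we have $\Sep_e^e = \emptyset$, so the statement is to be read for $n \neq e$, the only case that is used in the sequel). No genuine obstacle arises: the whole proposition is a bookkeeping lemma whose purpose is to record, for later use, how the ``separation'' and ``periodic'' sets defined by the algebra $A_e$ relate to the corresponding purely dynamical sets defined by $s$.
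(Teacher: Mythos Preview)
Your proof is correct and mirrors the paper's own argument, which simply declares (a), (b), (e) trivial, obtains (c), (d), (f) by taking complements, and says (g) follows from the definition of $\Per_e^{\infty}$. Your parenthetical remark that (g) only makes sense for nonidentity $n$ is a valid observation the paper glosses over; indeed, (g) is invoked later only under that hypothesis.
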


\begin{proof}
(a), (b) and (e) are trivial. (c), (d) and (f) follow from (a), (b) and (e)
by taking complements. (g) follows from the definition of $\Per_e^{\infty}$.
\end{proof}

\begin{prop}\label{separates}
If $e \in \ob(G)$ and $n \in G_e$, then
\begin{enumerate}[{\rm (a)}]
\item if $A_e$ separates the elements of $s(e)$,
then $\Per_e^n = \Per_{A_e}^n$ and
$\Sep_e^n = \Sep_{A_e}^n$;

\item if $s(e)$ is a Hausdorff topological space
and $s(n)$ is continuous,
then $\Per_e^n$ is closed and $\Sep_e^n$ is open;

\item if $s(e)$ is a topological space
and $A_e$ is an $R$-subalgebra of $C(s(e),R)$ with the
property that each set $f^{-1}( \{ 0 \} )$, for
$f \in A_e$, is closed, then
$\Per_{A_e}^n$ is closed and $\Sep_{A_e}^n$ is open.
\end{enumerate}
\end{prop}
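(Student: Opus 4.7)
The plan is to treat the three parts in turn, as each corresponds to a different topological or algebraic input.

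For part (a), I would use Proposition \ref{inclusions}(e) to get the inclusion $\Per_e^n \subseteq \Per_{A_e}^n$ for free, and then prove the reverse inclusion by exploiting the separation hypothesis. Specifically, if $x \in \Per_{A_e}^n$ then for every $f \in A_e$ we have $f(x) = \sigma(n)(f)(x) = f(s(n)(x))$; if $x \neq s(n)(x)$, then by hypothesis there would exist some $f \in A_e$ with $f(x) \neq f(s(n)(x))$, contradicting the previous equality. Hence $x = s(n)(x)$, i.e., $x \in \Per_e^n$. The equality $\Sep_e^n = \Sep_{A_e}^n$ then follows immediately by taking complements in $s(e)$ and invoking Proposition \ref{inclusions}(a) and (b).

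For part (b), I would write $\Per_e^n$ as the equalizer of the two continuous maps $\id_{s(e)}$ and $s(n)$, i.e., as the preimage of the diagonal $\Delta \subseteq s(e) \times s(e)$ under the continuous map $x \mapsto (x, s(n)(x))$. Since $s(e)$ is Hausdorff, $\Delta$ is closed, so $\Per_e^n$ is closed and its complement $\Sep_e^n$ is open by Proposition \ref{inclusions}(d).

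For part (c), I would observe that since $n \in G_e$ we have $d(n) = c(n) = e$, so $\sigma(n)$ is an $R$-algebra endomorphism of $A_e$. Consequently, for any $f \in A_e$ the function $f - \sigma(n)(f)$ lies in $A_e$, and by the hypothesis on zero sets the set $(f - \sigma(n)(f))^{-1}(\{0\})$ is closed in $s(e)$. Writing
\[
\Per_{A_e}^n = \bigcap_{f \in A_e} (f - \sigma(n)(f))^{-1}(\{0\})
\]
exhibits $\Per_{A_e}^n$ as an intersection of closed sets, hence closed; then $\Sep_{A_e}^n$ is open by Proposition \ref{inclusions}(c).

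The arguments are mostly topological bookkeeping; the only subtle point is in (a), where one must carefully unwind $\sigma(n)(f)(x) = f(s(n)(x))$ so that the separation property of $A_e$ can be applied to the pair $x, s(n)(x)$. Once this translation is in hand, each part reduces to a short verification.
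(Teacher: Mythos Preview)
Your proposal is correct and follows essentially the same route as the paper's proof: the same use of Proposition~\ref{inclusions}(e) plus separation for (a), the fixed-point-set-of-a-continuous-map argument for (b) (which you spell out via the diagonal whereas the paper simply cites it as well known), and the identical representation of $\Per_{A_e}^n$ as $\bigcap_{f \in A_e} (f-\sigma(n)(f))^{-1}(\{0\})$ for (c). The only cosmetic difference is which pair of items from Proposition~\ref{inclusions} you cite when passing to complements.
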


\begin{proof}
(a) The inclusion $\Per_e^n \subseteq \Per_{A_e}^n$ follows
from Proposition (\ref{inclusions})(e). The inclusion
$\Per_e^n \supseteq \Per_{A_e}^n$ follows from the
fact that $A_e$ separates the elements of $s(e)$.
The equality $\Sep_e^n = \Sep_{A_e}^n$ follows from the
equality $\Per_e^n = \Per_{A_e}^n$
and Proposition \ref{inclusions}(c)(d).

(b) It is well known that the set of fixed points of
a continuous function on a Hausdorff topological space
is a closed set. Therefore, $\Per_e^n$ is closed.
By Proposition \ref{inclusions}(b) it follows that $\Sep_e^n$ is open.

(c) Take $f \in A_e$. By the assumptions it follows that the set
$$\{ x \in s(e) \mid f(x) = \sigma(n)(f)(x) \} = (f-\sigma(n)(f))^{-1}( \{ 0 \} )$$
is closed. Therefore the set
$$\Per_{A_e}^n = \bigcap_{f \in A_e} (f-\sigma(n)(f))^{-1}( \{ 0 \} )$$
is closed. The last part follows from Proposition \ref{inclusions}(c).
\end{proof}

\begin{prop}\label{maxcomcondition}
Suppose that $R$ is an integral domain.
\begin{enumerate}[{\rm (a)}]
\item The commutant of $A$ in $A \rtimes^{\sigma} G$
is the set of elements of $A \rtimes^{\sigma} G$
of the form $\sum_{e \in \ob(G)}\sum_{n \in G_e} f_n u_n$ satisfying
$\Supp(f_n) \subseteq \Per_{A_e}^n$, for $e \in \ob(G)$ and $n \in G_e$.
In particular, $A$ is maximal commutative in $A \rtimes^{\sigma} G$
if and only if for each $e \in \ob(G)$ and each
nonidentity $n \in G_e$, the set $\Sep_{A_e}^n$
is a domain of uniqueness for $A_e$.

\item If for each $e \in \ob(G)$, $A_e$ separates the elements
of $s(e)$, then the commutant of $A$ in $A \rtimes^{\sigma} G$
is the set of elements of $A \rtimes^{\sigma} G$
of the form $\sum_{e \in \ob(G)} \sum_{n \in G_e} f_n u_n$ satisfying
$\Supp(f_n) \subseteq \Per_e^n$, for $e \in \ob(G)$ and $n \in G_e$.
In particular, $A$ is maximal commutative in $A \rtimes^{\sigma} G$
if and only if for each $e \in \ob(G)$ and each
nonidentity $n \in G_e$, the set $\Sep_e^n$
is a domain of uniqueness for $A_e$.
\end{enumerate}
\end{prop}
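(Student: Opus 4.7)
The plan is to reduce everything to Proposition \ref{commutativeA}(a) applied to the skew category system $(A,G,\sigma)$, and then to translate the algebraic commutant condition $\sigma(n)(f)-f \in \Ann(f_n)$ into a pointwise/support statement by exploiting that $R$ is an integral domain. Part (b) will then follow from part (a) by invoking Proposition \ref{separates}(a).

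For (a), I would start with an arbitrary element $x$ of the commutant of $A$; Proposition \ref{commutativeA}(a) already rewrites $x$ as $\sum_{e \in \ob(G)} \sum_{n \in G_e} f_n u_n$ with $f_n \in A_e$, and supplies the characterization $\sigma(n)(f) - f \in \Ann(f_n)$ for all $e \in \ob(G)$, $n \in G_e$ and $f \in A_e$. Since $A_e \subseteq C(s(e),R)$ and multiplication in $C(s(e),R)$ is pointwise, this condition reads
\begin{equation*}
f_n(x)\bigl( \sigma(n)(f)(x) - f(x) \bigr) = 0 \quad \text{for every } x \in s(e) \text{ and every } f \in A_e.
\end{equation*}
Because $R$ is an integral domain, at each $x \in s(e)$ either $f_n(x) = 0$ or $\sigma(n)(f)(x) = f(x)$ for every $f \in A_e$. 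In other words, the condition is equivalent to: every $x \in \Supp(f_n)$ satisfies $\sigma(n)(f)(x)=f(x)$ for all $f \in A_e$, i.e.\ every such $x$ lies in $\Per_{A_e}^n$. Thus the commutant condition becomes $\Supp(f_n) \subseteq \Per_{A_e}^n$ for all relevant $e$ and $n$, which is the first claim in (a). Conversely, if $\Supp(f_n) \subseteq \Per_{A_e}^n$, then at any $x \in s(e)$ either $x \notin \Supp(f_n)$ (so $f_n(x) = 0$) or $x \in \Per_{A_e}^n$ (so $\sigma(n)(f)(x) - f(x) = 0$), and in either case the pointwise product vanishes; so the elements of the stated form do lie in the commutant.

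For the ``in particular'' of (a), note that $A$ is maximal commutative precisely when no nontrivial endomorphism contribution $f_n u_n$ with $n \neq e$ can appear in such a $\sum f_n u_n$. By the support characterization, this amounts to: for every $e \in \ob(G)$, every nonidentity $n \in G_e$ and every $f_n \in A_e$, the condition $\Supp(f_n) \subseteq \Per_{A_e}^n$ forces $f_n = 0$. Using Proposition \ref{inclusions}(a), the inclusion $\Supp(f_n) \subseteq \Per_{A_e}^n$ is equivalent to $f_n$ vanishing on $\Sep_{A_e}^n$, so the last condition is exactly the assertion that $\Sep_{A_e}^n$ is a domain of uniqueness for $A_e$. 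Part (b) is then immediate: under the separation hypothesis, Proposition \ref{separates}(a) gives $\Per_e^n = \Per_{A_e}^n$ and $\Sep_e^n = \Sep_{A_e}^n$, so substituting these equalities into (a) yields (b) verbatim.

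There is no serious obstacle; the only subtle point is the pointwise translation of the annihilator condition, which requires $R$ to be an integral domain so that a product of two $R$-valued functions vanishes at a point only when one of the factors does. Once this is used, the rest is bookkeeping using Propositions \ref{commutativeA}, \ref{inclusions}, and \ref{separates}.
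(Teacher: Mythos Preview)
Your proposal is correct and follows essentially the same approach as the paper's proof: both start from Proposition~\ref{commutativeA}(a), rewrite the annihilator condition pointwise using that $R$ is an integral domain to obtain the support condition $\Supp(f_n)\subseteq \Per_{A_e}^n$, and then derive part (b) from part (a) via Proposition~\ref{separates}(a). The only cosmetic difference is that for the ``in particular'' clause the paper appeals directly to Proposition~\ref{commutativeA}(b), whereas you reuse the support description from (a) together with Proposition~\ref{inclusions}(a); these are equivalent routes.
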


\begin{proof}
(a) By Proposition \ref{commutativeA}(a) we get that
the commutant of $A$ in $A \rtimes^{\sigma} G$
is the set of elements $\sum_{e \in \ob(G)}\sum_{n \in G_e} f_n u_n$
in $A \rtimes^{\sigma} G$ satisfying
$\sigma(n)(g)-g \in \Ann(f_n)$ for all $g \in A_e$.
This means that $f_n(\sigma(n)(g)-g) = 0$
for all $g \in A_e$. From the fact that $R$
is an integral domain it follows that
if $x \in \Supp(f_n)$, then $\sigma(n)(g)(x)-g(x) = 0$,
for all $g \in A_e$, that is $x \in \Per_{A_e}^n$.
Therefore we get that
$\Supp(f_n) \subseteq \Per_{A_e}^n$, for $e \in \ob(G)$ and $n \in G_e$.
By Proposition \ref{commutativeA}(b) it follows that
$A$ is maximal commutative in $A \rtimes^{\sigma} G$
if and only if for each choice of $e \in \ob(G)$ and
nonidentity $n \in G_e$, we get that if $f_n \in A_e$ is nonzero
there exists $g \in A_e$
with $\sigma(n)(g)-g \notin \Ann(f_n)$, that is
such that $f_n (\sigma(n)(g)-g) \neq 0$.
This is in turn equivalent to the contrapositive statement that
to each choice of $e \in \ob(G)$ and
nonidentity $n \in G_e$, the equality $f_n|_{\Sep_{A_e}^n} = 0$
implies that $f_n = 0$, that is that $\Sep_{A_e}^n$
is a domain of uniqueness for $A_e$.

(b) This follows immediately from (a) and Proposition \ref{separates}(a).
\end{proof}

\begin{rem}
Proposition \ref{maxcomcondition} generalizes Theorem 3.3
and Theorem 3.5 in \cite{svesildejeu07}.
\end{rem}

\begin{thm}\label{manyimplications}
Consider the following three assertions:
\begin{enumerate}[{\rm (i)}]
\item $s$ is topologically free;

\item if $I$ is a nonzero ideal of $A \rtimes^{\sigma} G$,
then $I \cap A \neq \{ 0 \}$;

\item $A$ is maximal commutative in $A \rtimes^{\sigma} G$.
\end{enumerate}
Then:
\begin{enumerate}[{\rm (a)}]
\item {\rm (ii)} implies {\rm (iii)};

\item if $G$ is a groupoid, then {\rm (ii)} holds if and only if {\rm (iii)} holds;

\item if $R$ is an integral domain and for each $e \in \ob(G)$
the function space $A_e$ separates elements of $s(e)$ and for each $f \in A_e$,
the set $f^{-1}(\{ 0 \})$ is closed in $s(e)$, then {\rm (i)} implies {\rm (iii)};
this holds e.g. if $\{ 0 \}$ is closed in $R$, which, in turn, holds
e.g. when $R$ is Hausdorff;

\item if $R$ is an integral domain and for each $e \in \ob(G)$,
the space $s(e)$ is Hausdorff and Baire, the function
space $A_e$ separates elements of $s(e)$ and has the property that
for each nonempty open subset $U$ of $s(e)$, there is
a nonzero $f \in A_e$ that vanishes on $s(e) \setminus U$,
then {\rm (iii)} implies {\rm (i)}.
\end{enumerate}
\end{thm}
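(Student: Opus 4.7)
For part (a), I observe that the hypothesis $A_e \subseteq C(s(e),R)$ with $R$ commutative forces $A = \oplus_{e \in \ob(G)} A_e$ to be a commutative subring of $A \rtimes^{\sigma} G$. Proposition \ref{eqconditions}(a) then applies verbatim: its condition (i) is our (ii) and its condition (ii) is our (iii), so (ii) implies (iii). Part (b) is then the specialization of Proposition \ref{eqconditions}(c) to the groupoid case.

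For part (c), the plan is to combine Proposition \ref{maxcomcondition}(b) with a density argument. That proposition reduces maximal commutativity of $A$ in $A \rtimes^{\sigma} G$ to showing that, for every $e \in \ob(G)$ and every nonidentity $n \in G_e$, the set $\Sep_e^n$ is a domain of uniqueness for $A_e$. Fix such $e$ and $n$. Topological freeness means that $\Per_e^{\infty}$, the set of aperiodic points in $s(e)$, is dense, and by Proposition \ref{inclusions}(g) it is contained in $\Sep_e^n$, so $\Sep_e^n$ is dense in $s(e)$. If $f \in A_e$ vanishes on $\Sep_e^n$, then $f^{-1}(\{0\})$ is closed by hypothesis and contains a dense subset, so $f^{-1}(\{0\}) = s(e)$, giving $f = 0$. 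The concluding remark is immediate: continuity of $f$ makes $f^{-1}(\{0\})$ closed whenever $\{0\}$ is, and singletons are closed in a Hausdorff space.

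For part (d), I would argue by contrapositive. Suppose $s$ is not topologically free; then for some $e \in \ob(G)$ the aperiodic set $\Per_e^{\infty}$ is not dense in $s(e)$, so there is a nonempty open $U \subseteq s(e)$ consisting entirely of periodic points, i.e., $U \subseteq \bigcup_{n \in G_e \setminus \{e\}} \Per_e^n$. By Proposition \ref{separates}(b), each $\Per_e^n$ is closed (using that $s(e)$ is Hausdorff and that $s(n)$ is continuous since $s$ takes values in $\Top^{\op}$), and $U$ inherits the Baire property from $s(e)$. A Baire category argument then produces a single nonidentity $n \in G_e$ and a nonempty open $V \subseteq U$ with $V \subseteq \Per_e^n$, equivalently $V \cap \Sep_e^n = \emptyset$. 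Applying the bump-function hypothesis to $V$ yields a nonzero $f \in A_e$ vanishing on $s(e) \setminus V$, and in particular vanishing on $\Sep_e^n$. Hence $\Sep_e^n$ fails to be a domain of uniqueness for $A_e$, and Proposition \ref{maxcomcondition}(b) (invoked via the separation hypothesis) shows that $A$ is not maximal commutative in $A \rtimes^{\sigma} G$.

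The main obstacle I foresee is the Baire category step in part (d): expressing $U$ as a union of the closed sets $U \cap \Per_e^n$ and extracting a single $n$ whose $\Per_e^n$ has nonempty interior genuinely requires the indexing family to be countable, so the argument ultimately leans on a countability hypothesis on $G_e$ (supplied explicitly in the later application, Theorem \ref{genmaintheorem}). The other steps are essentially bookkeeping, routing the structural results of Section \ref{skewcategoryalgebras} through the topological hypotheses on $s(e)$ and $A_e$.
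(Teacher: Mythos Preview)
Your argument tracks the paper's proof essentially step for step: (a) and (b) via Proposition~\ref{eqconditions}, (c) via density of $\Per_e^{\infty}\subseteq \Sep_e^n$ combined with the closed-zero-set hypothesis and Proposition~\ref{maxcomcondition}, and (d) by contrapositive, using Hausdorffness to make each $\Per_e^n$ closed, a Baire argument to extract a single $n$ with $\Per_e^n$ having nonempty interior, and the bump-function hypothesis to build a nonzero $f u_n$ in the commutant. The only cosmetic difference is that in (c) the paper routes through $\Sep_{A_e}^n$ and Proposition~\ref{maxcomcondition}(a), first invoking Proposition~\ref{separates}(a) to identify $\Sep_{A_e}^n=\Sep_e^n$, whereas you go directly to Proposition~\ref{maxcomcondition}(b); these are equivalent under the separation hypothesis.

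Your closing observation about part (d) is well taken and deserves emphasis. The paper's own proof writes ``since $s(e)$ is Baire, there is some nonidentity $n\in G_e$ for which $\Sep_e^n$ is not dense,'' which is exactly the contrapositive of ``an intersection of open dense sets is dense''---a statement that the Baire property only guarantees for \emph{countable} families. The paper does not flag this, and as stated Theorem~\ref{manyimplications}(d) carries no countability hypothesis on $G_e$; the hypothesis only appears later in Theorem~\ref{genmaintheorem}. So you have not missed an idea here: the step is handled in the paper precisely as you outline, with the same implicit reliance on countability that you correctly isolate.
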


\begin{proof}
(a) and (b) follow immediately from
Proposition \ref{eqconditions}(a)(c).

(c) Take an $e \in \ob(G)$ and a nonidentity $n \in G_e$.
Suppose that the set $\Per_e^{\infty}$ is dense in $s(e)$.
By Proposition \ref{inclusions}(g), the inclusion
$\Per_e^{\infty} \subseteq \Sep_e^n$ holds.
By Proposition \ref{separates}(a) it follows that $\Sep_{A_e}^n$ is dense in $s(e)$.
If $f \in A_e$ satisfies $f(x) = 0$, for $x \in \Sep_{A_e}^n$,
then $f^{-1}(\{ 0 \})$ is a closed subset of $s(e)$ containing
the dense subset $\Sep_{A_e}^n$ of $s(e)$.
Therefore, $f^{-1}(\{ 0 \}) = s(e)$ and hence $\Sep_{A_e}^n$ is
a domain of uniqueness for $A_e$.
By Proposition \ref{maxcomcondition}(a) it follows
that $A$ is maximal commutative in $A \rtimes^{\sigma} G$.

(d) We show the contrapositive of (iii) implies (i).
Suppose that for some $e \in \ob(G)$, the
set $\Per_e^{\infty}$ is not dense in $s(e)$.
Since $s(e)$ is Hausdorff, it follows, by Proposition \ref{separates}(b),
that all the sets $\Sep_e^n$, for nonidentity $n \in G_e$, are open.
Therefore, since $s(e)$ is Baire,
there is some nonidentity $n \in G_e$
for which $\Sep_e^n$ is not dense, that is
such that $\Per_e^n$ has a nonempty interior $U$.
By the assumption on $A_e$ there is a nonzero
$f \in A_e$ that vanishes outside $U$.
Hence, by Proposition \ref{maxcomcondition}(b), it follows that
$f u_n$ belongs to the commutant of $A$ in $A \rtimes^{\sigma} G$.
But since obviously $f u_n$ does not belong to $A$,
the subring $A$ is not maximal commutative in $A \rtimes^{\sigma} G$.
\end{proof}

\begin{rem}
Theorem \ref{manyimplications} generalizes Theorem 3.7 in \cite{svesildejeu07}.
\end{rem}

\begin{rem}\label{topremark}
If $X$ is a Hausdorff and locally compact topological space,
then $X$ is Baire and all the complex algebras
\begin{itemize}
\item $C(X)$
\item $C_c(X) = \{ f \in C(X) \mid \Supp(f) \ {\rm compact} \}$
\item $C_b(X) = \{ f \in C(X) \mid f \ {\rm bounded} \}$
\item $C_0(X) = \{ f \in C(X) \mid {\rm for} \
{\rm every} \ \epsilon > 0, \ \{ x \mid |f(x)| \geq \epsilon \} \
{\rm is} \ {\rm compact} \}$
\end{itemize}
separate elements and have the property that to each nonempty open
subset $U$ of $X$, there is
a nonzero function $f$ in the algebra
that vanishes on $X \setminus U$ (see e.g. \cite{mun00}).
\end{rem}

\begin{thm}\label{equivalent}
Suppose that $s : G \rightarrow \Top^{\op}$
is a groupoid dynamical system.
If for each $e \in \ob(G)$ the topological space
$s(e)$ is Hausdorff and locally compact and
$A_e$ equals $C(e)$ (or $C_c(e)$; or $C_b(e)$;
or $C_0(e)$),
then the following three assertions are equivalent:
\begin{enumerate}[{\rm (i)}]
\item $s$ is topologically free;

\item if $I$ is a nonzero ideal of $A \rtimes^{\sigma} G$,
then $I \cap A \neq \{ 0 \};$

\item the ring $A$ is a maximal abelian complex subalgebra of
$A \rtimes^{\sigma} G$.
\end{enumerate}
\end{thm}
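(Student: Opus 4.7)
The plan is to observe that Theorem \ref{equivalent} follows essentially for free by assembling the four implications of Theorem \ref{manyimplications} and verifying that, under the present hypotheses, every required side-condition in parts (b), (c) and (d) of that theorem is automatically satisfied.

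First I would note that $R = \C$ is an integral domain and is Hausdorff, so $\{0\}$ is closed in $\C$. Consequently, for any continuous $f : s(e) \to \C$ the preimage $f^{-1}(\{0\})$ is closed in $s(e)$. This immediately verifies the hypothesis in Theorem \ref{manyimplications}(c), provided we also know that each $A_e$ separates the points of $s(e)$. That separation property, together with the fact that for every nonempty open $U \subseteq s(e)$ there exists a nonzero $f \in A_e$ vanishing on $s(e) \setminus U$, is exactly what Remark \ref{topremark} records for each of the four complex algebras $C(e)$, $C_c(e)$, $C_b(e)$ and $C_0(e)$ on a Hausdorff locally compact space. Remark \ref{topremark} also gives us that $s(e)$ is Baire, since Hausdorff locally compact spaces are Baire.

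With these observations in hand, the argument is just a chase. Theorem \ref{manyimplications}(c) applies (using Hausdorffness of $\C$ together with the separation property of $A_e$) and gives (i) $\Rightarrow$ (iii). Theorem \ref{manyimplications}(d) applies (using that $s(e)$ is Hausdorff and Baire, that $A_e$ separates points, and the support property for open sets, all supplied by Remark \ref{topremark}) and gives (iii) $\Rightarrow$ (i). Finally, since $G$ is assumed to be a groupoid, Theorem \ref{manyimplications}(b) yields (ii) $\Leftrightarrow$ (iii). Composing these gives the three-way equivalence (i) $\Leftrightarrow$ (ii) $\Leftrightarrow$ (iii).

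Since the heavy lifting has already been done in Section \ref{skewcategoryalgebras} and earlier in this section, there is no genuine obstacle here; the only thing to be careful about is checking the hypotheses case by case for the four choices $C(e)$, $C_c(e)$, $C_b(e)$, $C_0(e)$. These are precisely the cases covered by Remark \ref{topremark}, so it suffices to cite that remark. Thus the proof reduces to one paragraph invoking Remark \ref{topremark} and parts (b), (c) and (d) of Theorem \ref{manyimplications}.
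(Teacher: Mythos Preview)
Your proposal is correct and follows exactly the paper's own approach: the proof there reads simply ``This follows from Theorem \ref{manyimplications} and Remark \ref{topremark}.'' You have merely made explicit which parts of Theorem \ref{manyimplications} are being used and why Remark \ref{topremark} supplies the needed hypotheses, which is entirely in line with the intended argument.
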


\begin{proof}
This follows from Theorem \ref{manyimplications}
and Remark \ref{topremark}.
\end{proof}

\begin{thm}\label{discrete}
Suppose that for each $e \in \ob(G)$
the sets $s(e)$ and $R$ carry the discrete
topologies and $A_e$ equals the set of
all functions $s(e) \rightarrow R$.
Consider the following three assertions:
\begin{enumerate}[{\rm (i)}]
\item $s$ is topologically free;

\item if $I$ is a nonzero ideal of $A \rtimes^{\sigma} G$,
then $I \cap A \neq \{ 0 \}$;

\item $A$ is maximal commutative in $A \rtimes^{\sigma} G$.
\end{enumerate}
Then:
\begin{enumerate}[{\rm (a)}]
\item {\rm (ii)} implies {\rm (iii)};

\item if $G$ is a groupoid, then {\rm (ii)} holds
if and only if {\rm (iii)} holds;

\item if $R$ is an integral domain, then
{\rm (i)} holds if and only if {\rm (iii)} holds;

\item if there is $e \in \ob(G)$
and $x \in s(e)$ such that $G_e$ has
cardinality greater than the cardinality of
$\orb_e(x)$ and $G_e$ is a divisible monoid, that is
such that for any distinct $m,n \in G_e$,
there is $p \in G_e$ with $m=np$
or $n = mp$, then {\rm (iii)} does not hold;

\item if there is $e \in \ob(G)$ and $x \in s(e)$ such that
$\orb_e(x)$ is finite and $G_e$ is an infinite and divisible monoid, e.g.
$({\Bbb Z},+)$ or $({\Bbb N},+)$, then
{\rm (iii)} does not hold.
\end{enumerate}
\end{thm}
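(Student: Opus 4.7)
The plan is to dispatch the five parts in turn, leveraging the machinery built earlier in the paper. Parts (a) and (b) are immediate from Proposition \ref{eqconditions}: its item (a) gives (ii) $\Rightarrow$ (iii) in general, and item (c) supplies the converse when $G$ is a groupoid.

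For (c), I would verify that the discrete setting satisfies the hypotheses of Theorem \ref{manyimplications}(c) and (d). In a discrete space $s(e)$ every subset is clopen, so $s(e)$ is Hausdorff and Baire and $f^{-1}(\{0\})$ is automatically closed for every $f \in A_e$. Because $A_e$ consists of \emph{all} functions $s(e) \to R$, it separates points, and for any nonempty open $U \subseteq s(e)$ the characteristic function of a singleton $\{x\} \subseteq U$ with value $1_R$ is a nonzero element of $A_e$ vanishing on $s(e) \setminus U$. With $R$ an integral domain, Theorem \ref{manyimplications}(c) then yields (i) $\Rightarrow$ (iii) and Theorem \ref{manyimplications}(d) yields (iii) $\Rightarrow$ (i).

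The substantive case is (d). From $|G_e| > |\orb_e(x)|$ the evaluation map $G_e \to \orb_e(x)$, $k \mapsto s(k)(x)$, fails to be injective by pigeonhole, so I pick distinct $m, n \in G_e$ with $s(m)(x) = s(n)(x)$. The divisibility hypothesis supplies $p \in G_e$ satisfying, after possibly swapping $m$ and $n$, the equality $m = np$; necessarily $p$ is not the identity $e$, since $p = e$ would force $m = n$. Put $y := s(n)(x) = s(m)(x)$. Using the functoriality $s(np) = s(p) \circ s(n)$ inherited from $s : G \to \Top^{\op}$, I compute $s(p)(y) = s(p)(s(n)(x)) = s(np)(x) = s(m)(x) = y$, so $y$ is a fixed point of the nonidentity morphism $p$. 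Let $h \in A_e$ be the characteristic function of $\{y\}$ with value $1_R$. Then $h u_p$ is nonzero, lies outside $A$ (since $p \neq e$), and commutes with $A$: by Proposition \ref{commutativeA}(a) it is enough to verify $h \cdot (\sigma(p)(g) - g) = 0$ in $A_e$ for every $g \in A_e$, and this is immediate because $h$ is supported on $\{y\}$ and $(\sigma(p)(g) - g)(y) = g(s(p)(y)) - g(y) = 0$. Hence $A$ is not maximal commutative.

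Part (e) is a direct specialization of (d): one has $|\orb_e(x)| < |G_e|$ trivially when the former is finite and the latter infinite, and both $(\Z,+)$ and $(\N,+)$ satisfy the divisibility condition since for distinct $m, n$ one of the differences $m-n$ or $n-m$ lies in the monoid. The only step with any real content is the construction in (d); the hard part there is recognising that cardinality together with divisibility conspire to produce a fixed point of a nonidentity endomorphism, which in this discrete setting with maximal function algebras is precisely the obstruction to maximal commutativity isolated by Proposition \ref{maxcomcondition}.
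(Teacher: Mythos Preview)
Your proof is correct and follows the same route as the paper: parts (a)--(c) and (e) are handled identically (the paper cites Theorem \ref{manyimplications} for (a)--(c), which itself rests on Proposition \ref{eqconditions}), and the heart of (d) --- pigeonhole plus divisibility producing a fixed point of some nonidentity $p$ --- matches exactly. The only difference is in how (d) is finished: the paper observes that $\Per_e^p$ is nonempty and then invokes Theorem \ref{manyimplications}(d), whereas you build the commuting element $h u_p$ by hand via Proposition \ref{commutativeA}(a). Your direct route is in fact slightly cleaner, since Theorem \ref{manyimplications}(d) (through Proposition \ref{maxcomcondition}) carries an integral-domain hypothesis on $R$ that the statement of Theorem \ref{discrete}(d) does not impose; your explicit computation $h\cdot(\sigma(p)(g)-g)=0$ avoids that assumption entirely.
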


\begin{proof}
(a), (b), and (c) follow immediately from Theorem \ref{manyimplications}.

(d) Suppose that $e \in \ob(G)$ and $x \in s(e)$
are chosen so that $G_e$ is divisible and has
cardinality greater than the cardinality of $\orb_e(x)$.
Then there exist distinct $m,n \in G_e$ with $s(m)(x)=s(n)(x)$.
Since $G_e$ is divisible, there is a nonidentity $p \in G_e$
with $m = np$ (or $n = mp$). This implies that
$s(np)(x)=s(n)(x)$ (or $s(mp)(x)=s(m)(x)$), that is
$s(p)s(n)(x) = s(n)(x)$ (or $s(p)s(m)(x) = s(m)(x)$).
Hence $s(n)(x) \in \Per_e^p$ (or $s(m)(x) \in \Per_e^p$)
which implies that $\Per_e^p$ is nonempty.
The claim now follows from Theorem \ref{manyimplications}(d).

(e) follows from (d).
\end{proof}

\begin{defn}\label{tychonoff}
Recall that a topological space $X$ is called {\it completely regular}
if given any open subset $U$ of $X$ and any $x \in U$,
there is $f \in C(X)$ with $f(x)=1$ and $f(y)=0$, for $y \in X \setminus U$.
Moreover, a topological space $X$ is called {\it Tychonoff}
if it is completely regular and Hausdorff.
\end{defn}

\noindent {\bf Proof of Theorem \ref{genmaintheorem}.}
This follows from Theorem \ref{equivalent}
and Definition \ref{tychonoff}. {\hfill $\square$}

\begin{defn}\label{partialdefn}
Suppose that $X$ is a topological space.
By a {\it partially defined dynamical system}
on $X$ we mean a collection $P$ of functions
such that
\begin{itemize}
\item if $f\in P$, then the domain $d(f)$ of $f$
and the codomain $c(f)$ of $f$ are subsets of $X$
and $f$ is continuous as a function
$d(f) \rightarrow c(f)$ where $d(f)$ and $c(f)$ are equipped
with the relative topologies induced by the topology on $X$;

\item if $f \in P$, then $\id_{d(f)} \in P$ and $\id_{c(f)} \in P$; 

\item if $f,g\in P$ are such that $d(f)=c(g)$, then $f \circ g\in P$.
\end{itemize}
We say that an element $x$ of $X$ is periodic with respect to $P$
if there is a nonidentity function $f$ in $P$
with $d(f)=c(f)$ and $f(x)=x$.
An element $x$ is aperiodic with respect to $P$
if it is not periodic.
We say that $P$ is topologically free if the set
of aperiodic elements of $X$ is dense in $X$.
By abuse of notation, we let $P$ denote the category
having the domains and codomains of functions in $P$
as objects and the functions of $P$ as morphisms.
We let the obvious functor $P \rightarrow \Top$
be denoted by $t_P$. Let $G_P$ denote the opposite
category of $P$ and let $s_P : G_P \rightarrow \Top^{\op}$
denote the opposite functor of $t_P$.
We will call $s_P$ the \emph{category dynamical system on $X$
defined by the partially defined dynamical system $P$}.
\end{defn}

\begin{prop}
If $X$ is a topological space and $P$ is a
partially defined dynamical system on $X$,
then $P$ is topologically free if and only if
$s_P$ is topologically free as a category
dynamical system.
\end{prop}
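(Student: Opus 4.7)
My plan is to prove the equivalence by unwinding the definitions and directly matching the notions of periodicity in $P$ and in the category dynamical system $s_P$. First I would observe that $\ob(G_P) = \ob(P)$ consists precisely of the subsets of $X$ appearing as $d(f)$ or $c(f)$ for some $f \in P$, each equipped with the relative topology from $X$; the requirement that $\id_{d(f)}, \id_{c(f)} \in P$ whenever $f \in P$ guarantees that $\ob(P)$ contains exactly these subsets. For each such object $e$, passage to the opposite category leaves loops unchanged, so the set of morphisms $e \to e$ in $G_P$ is literally the set of $f \in P$ with $d(f) = c(f) = e$, and the functor $s_P = t_P^{\op}$ sends each such loop $n$ to the continuous self-map $n \colon e \to e$ (direction being irrelevant for loops).

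With this identification in hand, an element $x \in s_P(e) = e$ is periodic with respect to $s_P$ iff there is a nonidentity $f \colon e \to e$ in $P$ with $f(x) = x$. Comparing to Definition \ref{partialdefn}, $x \in X$ is periodic with respect to $P$ iff there exists some $e \in \ob(G_P)$ with $x \in e$ together with a nonidentity loop $f \colon e \to e$ in $P$ satisfying $f(x) = x$; equivalently, iff $x$ is periodic in $s_P(e)$ for some $e \in \ob(G_P)$ containing $x$. Passing to complements, $x \in X$ is aperiodic with respect to $P$ if and only if $x$ is aperiodic as an element of $s_P(e)$ for every $e \in \ob(G_P)$ with $x \in e$.

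The remaining step is to translate the density of the aperiodic set of $P$ in $X$ into density of the aperiodic set of $s_P$ in each $s_P(e)$, and conversely. In the forward direction, given $e \in \ob(G_P)$ and a nonempty relatively open set $U = V \cap e$ with $V \subseteq X$ open, one locates a $P$-aperiodic point inside $V \cap e$ (using density in $X$) and observes that by the correspondence above it is automatically $s_P$-aperiodic at $e$. In the reverse direction, given a nonempty open $V \subseteq X$, one exploits both the objects $e \in \ob(G_P)$ covering the dynamically relevant part of $X$ and the fact that points of $X$ lying outside every such $e$ are vacuously $P$-aperiodic. I expect the main obstacle to be this density transfer: since density in $X$ and density in the various relative subspaces $e$ are a priori distinct conditions, the proof will need to exploit the structural identification of periodic points established in the second paragraph, together with the compatibility of the relative topology on each $e$ with the open sets of $X$, in order to push the density conclusion through in both directions.
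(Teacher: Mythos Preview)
Your identification of $P$-periodicity with $s_P$-periodicity at some object containing the point is correct, and you are right to single out the density transfer as the heart of the matter. For comparison, the paper's own argument is just the single sentence ``This follows immediately from the definition of topological freeness of $P$ and $s_P$'', so your outline is already far more detailed than what the paper supplies.

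The gap you anticipate, however, is real and cannot be filled. In your forward direction you write that from density in $X$ ``one locates a $P$-aperiodic point inside $V\cap e$''; but density in $X$ only yields a point of $V$, and nothing forces it into $e$ when $e$ is not open in $X$. Concretely, take $X=[0,1]$ with its usual topology, let $e=\{0,1\}$, and let $P=\{\id_e,f\}$ where $f\colon e\to e$ is given by $f(0)=f(1)=0$ (note $f\circ f=f$, so $P$ is closed under composition). The only $P$-periodic point of $X$ is $0$, hence the $P$-aperiodic set $(0,1]$ is dense in $X$ and $P$ is topologically free; yet $s_P(e)=\{0,1\}$ carries the discrete relative topology, its aperiodic set is $\{1\}$, and this is not dense, so $s_P$ is not topologically free. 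Thus the equivalence fails as stated, your instinct about the density transfer was exactly on target, and neither your sketch nor the paper's one-line assertion can be completed without an additional hypothesis on how the objects of $P$ sit inside $X$.
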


\begin{proof}
This follows immediately from the definition
of topological freeness of $P$ and $s_P$.
\end{proof}

To illustrate the above definitions and results,
we end the article with a simple example of
a partially defined dynamical system.

\begin{exmp}\label{partialexmp}
Suppose that we let $X$ denote the real numbers
equipped with its usual topology
and we let $Y$ denote the set of nonnegative real numbers equipped with
the relative topology induced by the topology on $X$.
Let $\opsqr : X \rightarrow Y$
and $\opsqrt : Y \rightarrow X$ denote the square function
and the square root function, respectively.
Furthermore, let $\opabs : X \rightarrow X$ denote the absolute value.
Let $P$ be the partially defined dynamical system
with $\ob(P) = \{ X,Y \}$ and $\mor(P) = \{ \id_X , \id_Y , \opsqr , \opsqrt , \opabs \}$.
Then we get the following table of partial composition for $P$
$$\begin{array}{c|ccccc}
\circ & \id_X & \id_Y & \opsqr & \opsqrt  & \opabs \\ \hline
\id_X & \id_X & *     & *   & \opsqrt  & \opabs \\
\id_Y & *     & \id_Y & \opsqr & *     & *   \\
\opsqr   & \opsqr   & *     & *   & \id_Y & \opsqr \\
\opsqrt  & *     & \opsqrt  & \opabs & *     & * \\
\opabs   & \opabs   & *     & *   & \opsqrt  & \opabs
\end{array}$$
Let $G = P^{\op}$ and put $A_X = C(X,{\Bbb R})$ and $A_Y = C(Y,{\Bbb R})$.
Take $f_X , f_X' , g_X , g_X' , h_X , h_X' \in A_X$ and
$f_Y , f_Y' , g_Y , g_Y' \in A_Y$. Then the product of
$$B_1 := f_X u_{\id_X} + g_X u_{\opabs} + h_X u _{\opsqr} + f_Y u_{\id_Y} + g_Y u_{\opsqrt}$$
and
$$B_2 := f_X' u_{\id_X} + g_X' u_{\opabs} + h_X' u _{\opsqr} + f_Y' u_{\id_Y} + g_Y' u_{\opsqrt}$$
in the skew category algebra $A \rtimes^{\sigma} G$ equals
$$
B_1 B_2 = f_X f_X' u_{\id_X} +
\left( f_X g_X' + g_X (f_X' \circ \opabs) + g_X (g_X' \circ \opabs) + h_X (g_Y' \circ \opsqr) \right) u_{\opabs} +$$
$$+ \left( f_X h_X' + h_X (f_Y' \circ \opsqr) + g_X (h_X' \circ \opabs) \right) u_{\opsqr} +
\left( f_Y f_Y' + g_Y (h_X' \circ \opsqrt) \right) u_{\id_Y} +$$
$$ + \left( f_Y g_Y' + g_Y (f_X' \circ \opsqrt) + g_Y (g_X' \circ \opsqrt) \right) u_{\opsqrt}.$$
Now we examine the properties (i), (ii) and (iii) in Theorem \ref{equivalent}.
It turns out that they are all false for our particular example.

Property (i) is false. In fact, the subset of periodic elements of $X$ with respect to $P$
is $Y$. Therefore, the set of aperiodic elements of $X$ is not dense in X.

Property (ii) is false. Indeed, the ideal $I$ of $A \rtimes^{\sigma} G$
generated by $u_{\opabs}$ equals
$$A_X u_{\opabs} + A_X u_{\opsqr} + A_Y u_{\opsqrt}$$
and hence has the property that $I \cap A = \{ 0 \}$.

Property (iii) is false. By Proposition \ref{maxcomcondition}(b) it follows
that the commutant of $A$ in $A \rtimes^{\sigma} G$ equals the set of elements
of $A \rtimes^{\sigma} G$ of the form
$$f_X u_{\id_X} + g_X u_{\opabs} + f_Y u_{\id_Y}$$
with $f_X , g_X \in A_X$ and $f_Y \in A_Y$
satisfying $\Supp(g_X) \subseteq Y$.
Therefore $A$ is not maximal commutative in $A \rtimes^{\sigma} G$.
\end{exmp}

\section*{Acknowledgements}
The second author was partially supported by The Swedish Research Council (postdoctoral fellowship no. 2010-918),
The Danish National Research Foundation (DNRF) through the Centre for Symmetry and Deformation,
The Crafoord Foundation,
The Royal Physiographic Society in Lund,
The Swedish Foundation for International Cooperation in Research and Higher Education (STINT) and The Swedish Royal Academy of Sciences.

\end{document}